\documentclass[10pt, oneside]{amsart}
\usepackage{amssymb,latexsym}
\usepackage{enumerate}
\usepackage{pstricks}
\usepackage{pst-node}
\newrgbcolor{colorato}{0.8  0.08  0}

\usepackage{color}
\newtheorem{thm}{Theorem}[section]
\newtheorem{cor}[thm]{Corollary}


\theoremstyle{definition}

\newtheorem{definition}[thm]{Definition}
\newtheorem{rem}[thm]{Remark}
\newtheorem{exa}[thm]{Example}

\newtheorem{proposition}[thm]{Proposition}


\numberwithin{equation}{section}
\usepackage[margin=1.5in]{geometry}

\frenchspacing

\textwidth=15cm
\textheight=23cm
\parindent=0mm
\oddsidemargin=0cm
\topmargin=-0.5cm

\usepackage{euscript}
\newcommand{\Tr}{T_{\!\rm rich}}

\newcommand{\E}{\exists}
\newcommand{\A}{\forall}

\newcommand{\K}{{\EuScript K}}
\newcommand{\imp}{\rightarrow}
\newcommand{\Obj}{\textrm{Obj}}
\newcommand{\Mor}{\textrm{Mor}}
\newcommand{\dom}{\textrm{dom}}
\newcommand{\range}{\textrm{rng}}
\newcommand{\IFF}{\Leftrightarrow}
\newcommand{\id}{\textrm{id}}
\newcommand{\IMP}{\Rightarrow}
\newcommand{\Th}{\mathop{\textrm{Th}}}

\newcommand{\acl}{\mathord{\textrm{acl}}}

\renewcommand{\phi}{\varphi}

\parindent=0mm
\parskip=.6\baselineskip

\author[S. Barbina]{Silvia Barbina}
\address{Cambridge University Press\\ Shaftesbury Road, Cambridge CB2 8RU, UK}
\email{silvia.barbina@gmail.com}

\author[D. Zambella]{Domenico Zambella}
\address{Dipartimento di Matematica\\Universit\`a di Torino\\via Carlo Alberto 10, 10123 Torino, Italy}
\email{domenico.zambella@unito.it}

\begin{document}
\baselineskip=3ex

\title{A viewpoint on amalgamation classes}

\date{}

\begin{abstract}We provide a self-contained introduction to the classical theory of 
universal-homogeneous models (also known as generic structures, rich models, 
or Fra\"iss\'e limits). In the literature, most treatments restrict 
consideration to embeddings among finite structures. This is not suitable 
for some applications. We take the notion of morphisms as primitive and we 
allow structures to have arbitrary cardinality.

AMS 2000 Subject Classification. Primary 03C10. Secondary, 03C07 03C30.
\end{abstract}
\maketitle

\section{Introduction}
Universal homogeneous models, here called \emph{rich} models, are a fundamental tool in model theory. They were first introduced by Fra\"{\i}ss\'e and in the last two decades they have become a basic tool for the construction of a variety of (counter)examples --- see for instance \cite{hrushovski}, \cite{carre}, \cite{bose} and many others. Rich models are usually constructed by axiomatizing the notion of strong submodel. Here we present an axiomatization based on the notion of morphism.

The concept of model companion is closely related to the notion of rich model. For instance, the random graph can be obtained as the Fra\"{\i}ss\'e limit of the class of all finite graphs, but it can also be defined as the model companion of the theory of infinite graphs. Generic automorphisms, introduced by Lascar as \textit{beaux automorphismes} in~\cite{lascar}, can be obtained either as Fra\"{\i}ss\'e limits or as model companions as in~\cite{chpi}  (see also \cite{balshel} and \cite{generici}).

The connection between these two approaches is well understood when the amalgamation class is \textit{connected}, i.e. it satisfies the joint embedding property (JEP), but the relationship is less clear when JEP fails. In Section~\ref{saturation} we produce an example of an amalgamation class where each connected component has a saturated rich model but the theory of the rich models is not model-complete (see Remark~\ref{fullnecessary}). Sections~\ref{saturation}  and~\ref{companions} are dedicated to surveying the relation between the saturation of the rich models and the model-completeness of their theory. They collect facts that to our knowledge have never been treated in a comprehensive self-contained way.

\section{Inductive amalgamation classes}\label{definitions}

In this section we present an axiomatization of {\it inductive amalgamation classes\/} based on the notion of morphism. This differs from the approach commonly found in the literature, where the primitive notion is that of strong submodel (here denoted by $\le$). 

In order to state our axioms it is essential to explain the meaning of the word \emph{map\/} in this paper. A map $f:M\imp N$ is a \emph{triple\/} where $M$ is a structure called the \emph{domain\/} of the map, $N$ is a structure called the \emph{co-domain\/} of the map, and $f$ is a function in the set-theoretic sense with $\dom f\subseteq M$ and $\range f\subseteq N$. We call $\dom f$ the \emph{domain of definition\/} of the map and $\range f$ the \emph{range\/} of the map. If $A\subseteq\dom f$ we say that $f$ is \emph{defined on\/} $A$. So $f:M\imp N$ and $f:M'\imp N'$ are different maps unless $M=M'$ and $N=N'$. 

The \emph{composition\/} of two maps is defined when the co-domain of the first map is the domain of the second map. Clearly, composing two non total maps may give the empty map as a result. When $f:M\imp N$ is injective (which will always be the case in this paper) its  \emph{inverse\/} is the map $f^{-1}:N\imp M$.

When $M$ and $N$ are structures in a given signature, a \emph{partial embedding\/} is a map $f:M\imp N$ such that $M\models\phi(a)\ \IFF\ N\models\phi(f a)$ for every quantifier-free formula $\phi(x)$ and every tuple $a \subseteq \dom f$.  An \emph{elementary map\/} is defined similarly but with $\phi(x)$ ranging over all formulas. A partial embedding which is a total map is called an \emph{embedding\/} and a total elementary map is called an \emph{elementary embedding}. 

\begin{definition}
Fix a countable language $L$. An \emph{inductive amalgamation class\/} $\K$ is a category where $\Obj(\K)$ consists of infinite structures of signature $L$, $\Mor(\K)$ contains partial embeddings between structures, and which satisfies axioms \textsf{\footnotesize K0}, \textsf{\footnotesize K1}, \textsf{\footnotesize K2}, \textsf{\footnotesize R}, \textsf{\footnotesize Ap} and \textsf{\footnotesize In} below, where composition of morphisms is composition of maps, a \emph{model\/} is an element of $\Obj(\K)$ and a \emph{morphism\/} is an element of  $\Mor(\K)$.
\begin{itemize}
\item[\textsf{\footnotesize K0}.] models are closed under elementary equivalence;
\item[\textsf{\footnotesize K1}.] all elementary maps are morphisms;
\item[\textsf{\footnotesize K2}.] the inverse (in the sense above) of a morphism is a morphism;
\item[\textsf{\footnotesize R}.]  if $h:M\imp N$ is a morphism and $f\subseteq h$ then $f:M\imp N$ is a morphism.
\end{itemize}

A morphism that is total is called a \emph{strong embedding}. The structure $M$ is a \emph{strong submodel\/} of $N$, written $M\le N$, if $M\subseteq N$ and $\id_M:M\imp N$ is a morphism (hence a strong embedding). We call $h:M'\imp N'$ an \emph{extension\/} of $f:M\imp N$ if $M\le M'$, $N\le N'$ and $f\subseteq h$. 

\begin{itemize}
\item[\textsf{\footnotesize Ap}.] Every morphism $f:M\imp N$ has an extension to a strong embedding $h:M\imp N'$.
\end{itemize}
A \emph{chain of models\/} is a sequence of models $\langle M_i : i < \lambda \rangle$ such that $M_i\le M_j$ whenever $i<j$.
\begin{itemize}
\item[\textsf{\footnotesize In}.] The union $M$ of a chain of models  $\langle M_i : i < \lambda \rangle$ is a model and $M_i\le M$ for every $i<\lambda$.
\end{itemize}
\end{definition}

In \textsf{\footnotesize K2} the word \textit{inverse\/} does not have the meaning it has in a category: the composition of $f:M\imp N$ and $f^{-1}:N\imp M$ is not $\id_M$ but merely the identity on $\dom f$. Axiom \textsf{\footnotesize R} is not essential but it is assumed to simplify the exposition. If $\K$ satisfies all the axioms above except for \textsf{\footnotesize R}, we define an inductive amalgamation class $\K'$ whose objects are those of $\K$ and whose morphisms are

\hfil $\Mor(\K')\ \ =\ \ \Big\{h:M\imp N\ \ |\ \ \Mor(\K)\textrm{ contains a restriction of }\ h:M\imp N\ \Big\}$.

For our purposes, we can safely replace $\K$ with $\K'$. Axiom \textsf{\footnotesize Ap} is a convenient way to formulate the amalgamation property. This is usually stated as in \textsf{\footnotesize Ap$'$} below.

\begin{proposition}
Modulo \textsf{\footnotesize K0}-\textsf{\footnotesize K2}, axiom \textsf{\footnotesize Ap} is equivalent to the following
\begin{itemize}
\item[\textsf{\footnotesize Ap$'$}.] if $f_i:M\imp N_i$ for $i=1,2$ are morphisms then there is a model $N$ and two strong embeddings $h_i:N_i\imp N$ such that $h_1\,f_1\mathord\restriction\dom f_2=h_2\,f_2\mathord\restriction\dom f_1$.
\end{itemize}
\end{proposition}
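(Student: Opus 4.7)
\emph{Proof plan.} I would prove the two implications separately; the direction \textsf{\footnotesize Ap}$\Rightarrow$\textsf{\footnotesize Ap$'$} is essentially bookkeeping, while \textsf{\footnotesize Ap$'$}$\Rightarrow$\textsf{\footnotesize Ap} requires a small structure-transport argument based on \textsf{\footnotesize K0} and \textsf{\footnotesize K1}.

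For \textsf{\footnotesize Ap}$\Rightarrow$\textsf{\footnotesize Ap$'$}, given $f_1:M\imp N_1$ and $f_2:M\imp N_2$, the idea is to form the composition $g=f_2\circ f_1^{-1}:N_1\imp N_2$, which is a morphism by \textsf{\footnotesize K2} and the fact that $\K$ is a category. Applying \textsf{\footnotesize Ap} to $g$ yields a strong embedding $h_1:N_1\imp N$ with $N_2\le N$ and $g\subseteq h_1$, and I would let $h_2:N_2\imp N$ be the inclusion. A direct unwinding on $\dom f_1\cap\dom f_2$ gives $h_1f_1(x)=g(f_1(x))=f_2(x)=h_2f_2(x)$, which is exactly the commutation \textsf{\footnotesize Ap$'$} demands.

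For the converse, given a morphism $f:M\imp N$ I would apply \textsf{\footnotesize Ap$'$} to $\id_M:M\imp M$ and $f:M\imp N$, obtaining a model $N'$ together with strong embeddings $h_1:M\imp N'$, $h_2:N\imp N'$ such that $h_1\mathord\restriction\dom f=h_2\circ f$. The delicate point is that $h_2$ is only a strong embedding, not necessarily the identity inclusion, so $N\le N'$ does not yet hold on the nose. I would repair this by transporting the $L$-structure of $N'$ along the set-bijection that sends $h_2(b)$ to $b$ for $b\in N$ and is the identity elsewhere, producing a structure $\widetilde N\cong N'$ with $N\subseteq\widetilde N$. Then \textsf{\footnotesize K0} gives $\widetilde N\in\Obj(\K)$; \textsf{\footnotesize K1} makes the induced isomorphism $\sigma:N'\imp\widetilde N$ a morphism (isomorphisms are elementary); and composing shows that $\sigma\circ h_2=\id_N$ is a morphism, yielding $N\le\widetilde N$, while $\sigma\circ h_1:M\imp\widetilde N$ is a total morphism extending $f$, as required.

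The main obstacle is the rigidity gap just mentioned: \textsf{\footnotesize Ap$'$} delivers an amalgam only up to isomorphism of the target, whereas the definition of \emph{extension} used in \textsf{\footnotesize Ap} insists on literal set-theoretic containment $N\subseteq N'$. The transport step, made legal by \textsf{\footnotesize K0} being phrased via elementary equivalence and \textsf{\footnotesize K1} including all elementary maps, is precisely the bridge that turns one formulation into the other.
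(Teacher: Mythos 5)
Your proof is correct and follows essentially the same route as the paper: the direction \textsf{\footnotesize Ap}$\Rightarrow$\textsf{\footnotesize Ap$'$} via the morphism $f_2f_1^{-1}:N_1\imp N_2$ and \textsf{\footnotesize K2}, and the converse by amalgamating $f$ with $\id_M$. Your explicit transport-of-structure step is just a spelled-out version of the paper's opening observation (that a strong embedding $h$ gives $h[N]\le N'$ via \textsf{\footnotesize K0} and \textsf{\footnotesize K1}), which the paper leaves implicit; the only nitpick is that the bijection ``$h_2(b)\mapsto b$, identity elsewhere'' may fail to be injective if $N$ meets $N'\smallsetminus h_2[N]$, so one should first replace that complement by a disjoint copy.
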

\begin{proof}
Observe first that if $f:M\imp N$ is a strong embedding then $f[M]\le N$. In fact, $f^{-1}:f[M]\imp M$ is an isomorphism so, in particular, an elementary map. Then, by \textsf{\footnotesize K0}, $f[M]$ is a model and by \textsf{\footnotesize K1} $f^{-1}:f[M]\imp M$ is a morphism. Composing it with $f:M\imp N$, we can conclude that the natural embedding of $f[M]$ into $N$ is a morphism.

To prove \textsf{\footnotesize Ap$'$} $\IMP$ \textsf{\footnotesize Ap}, amalgamate $f:M\imp N$ and $\id_M:M\imp M$. For the converse, apply \textsf{\footnotesize Ap} to the morphism $f_2f^{-1}_1:N_1\imp N_2$ to obtain a strong embedding $h:N_1\imp N$ into some $N_2\le N$. This and $\id_{N_2}:N_2\imp N$ are the two embeddings $h_i:N_i\imp N$ required in \textsf{\footnotesize Ap$'$}.
\end{proof}

We say that $\K$ is \emph{connected\/} if between any two models there is a morphism. The following is an immediate consequence of amalgamation.

\begin{proposition}
The following are equivalent for any amalgamation class $\K$:
\begin{itemize}
\item[\textsf{\footnotesize C}.] $\K$ is connected;
\item[\textsf{\footnotesize Jep}.] For every pair of models $M_1$ and $M_2$ there are a model $N$ and embeddings $f_i:M_i\imp N$ for $i=1,2$.
\end{itemize}
\end{proposition}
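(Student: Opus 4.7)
The plan is to prove both directions by manipulating morphisms through axioms \textsf{\footnotesize Ap} and \textsf{\footnotesize K2}, in the same style as the preceding Proposition. I read the embeddings in \textsf{\footnotesize Jep} as strong embeddings, which is the natural interpretation in this amalgamation framework and what makes the implication \textsf{\footnotesize Jep} $\IMP$ \textsf{\footnotesize C} fall out immediately from the axioms.

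For \textsf{\footnotesize C} $\IMP$ \textsf{\footnotesize Jep}, given two models $M_1$ and $M_2$, connectedness supplies a morphism $f:M_1\imp M_2$. I then apply \textsf{\footnotesize Ap} to extend $f$ to a strong embedding $h:M_1\imp N$ with $M_2\le N$. The identity $\id_{M_2}:M_2\imp N$ is a strong embedding because $M_2\le N$, so the pair $h$ and $\id_{M_2}$ realises the joint embedding required by \textsf{\footnotesize Jep}.

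For the converse \textsf{\footnotesize Jep} $\IMP$ \textsf{\footnotesize C}, suppose $f_i:M_i\imp N$ are strong embeddings into a common model $N$. I use \textsf{\footnotesize K2} to turn $f_2$ into a morphism $f_2^{-1}:N\imp M_2$, whose domain of definition is the range of $f_2$. Composing in the category $\K$ gives the morphism $f_2^{-1}f_1:M_1\imp M_2$, so any two models admit a morphism between them and $\K$ is connected.

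The entire argument is essentially bookkeeping against the axioms; the only step requiring any care is keeping the distinction between embeddings, morphisms, and strong embeddings straight, so that \textsf{\footnotesize K2} and composition in $\Mor(\K)$ are applied to objects the axioms actually govern. I do not expect any serious obstacle.
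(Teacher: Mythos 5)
Your proof is correct and is exactly the argument the paper has in mind when it calls the proposition ``an immediate consequence of amalgamation'' (the paper supplies no written proof). Your reading of the $f_i$ in \textsf{\footnotesize Jep} as strong embeddings is the right one --- it is what makes \textsf{\footnotesize K2} applicable in the converse direction --- and both steps (extending a connecting morphism via \textsf{\footnotesize Ap}, and composing $f_2^{-1}f_1$ in $\K$) check out against the axioms.
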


An example of an inductive amalgamation class is obtained by taking all integral domains as models (or, generally, the class of Krull-minimal models~\cite{turkish}) and all partial embeddings as morphisms. This class is not connected: a connected component contains the domains of a fixed characteristic. In the terminology defined in the next section, the rich models of this class are the algebraically closed fields. As a second example, take the class whose models are all infinite structures of signature $L$ and whose morphisms are all partial elementary maps between models. This class is not connected unless $T$ is complete. The connected components consist of models that are elementarily equivalent. The saturated models are the {\it rich\/} models of this class. Finally, highly non trivial examples are obtained from Hrushovski-style constructions such as~\cite{hrushovski}: in such settings, one works with an inductive amalgamation class where models are the models of some theory $T_0$ and morphisms are partial embeddings between self-sufficient subsets.

We conclude this section by stating an important consequence of our axioms: the \emph{finite character} of morphisms, which will be proved in Theorem~\ref{FC}.

\begin{itemize}
\item[\textsf{\footnotesize Fc.}] If all finite restrictions of $f:M\imp N$ are morphisms then $f:M\imp N$ is a morphism.
\end{itemize}

\section{Rich models.}%
\label{rich}

The arguments in this and the following section are either folklore or have appeared in several places e.g.\@ \cite{lascar}, \cite{goode}, \cite{carre}. We fix an inductive amalgamation class $\K$.

\begin{definition} 
Let $\lambda$ be an infinite cardinal. A model $U$ is \emph{$\lambda$--rich\/} if every morphism $f:M\imp U$ such that $|f|<|M|\le\lambda$ has an extension to a strong embedding of $M$ into $U$. That is, there is a total morphism $h:M\imp U$ such that $f\subseteq h$. When $\lambda=|U|$ we say that $U$ is \emph{rich}.
\end{definition}

Using the downward L\"owenheim-Skolem Theorem and FC, it is not difficult to prove that when $\lambda$ is uncountable we can replace $|f|<|M|\le\lambda$ with $|M|<\lambda$ (as in~\cite{chpi}) and obtain an equivalent notion. The case $\lambda=\omega$ does not apply as we do not allow models to be finite.

\begin{exa}\label{randomgraph} 
The countable random graph is a rich model of the inductive amalgamation class which contains all infinite graphs and all partial embeddings between them. All Fra\"{\i}ss\'e limits of finitely generated structures can also be thought of as rich models of a suitably defined inductive amalgamation class. When $\K$ consists of models of some theory $T$ and partial embeddings between them, the $\lambda$-rich models are exactly the existentially closed models of $T$ that are $\lambda$-saturated with respect to quantifier-free types.
\end{exa}

\begin{thm}[Existence]\label{existence} Let $\lambda$ and $\kappa$ be cardinals such that $2^\lambda\le\kappa=\kappa^{<\lambda}$. Then every model $U_0$ of cardinality $\le\kappa$ embeds in a $\lambda$--rich model $U$ of cardinality $\kappa$.
\end{thm}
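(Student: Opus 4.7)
My plan is to build a continuous chain of models $\langle U_\alpha : \alpha<\kappa\rangle$ with $|U_\alpha|\le\kappa$ and take $U=\bigcup_{\alpha<\kappa}U_\alpha$. By \textsf{\footnotesize In}, $U$ is a model containing $U_0$, and $U_\alpha\le U$ for every $\alpha$. The chain will be arranged so that every partial morphism of the appropriate size arising at any stage is extended to a total one at some later stage, which forces the $\lambda$-richness of $U$.

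The bookkeeping rests on counting. A \emph{task} at stage $\alpha$ is a morphism $f:M\imp U_\alpha$ with $|f|<|M|\le\lambda$. Up to isomorphism there are at most $2^\lambda$ pairs $(M,\dom f)$ with $M$ an $L$-structure of cardinality $\le\lambda$ and $\dom f\subseteq M$, since $L$ is countable. For each such pair there are at most $|U_\alpha|^{<\lambda}\le\kappa^{<\lambda}=\kappa$ choices of injection $\dom f\imp U_\alpha$. Hence each stage admits at most $2^\lambda\cdot\kappa=\kappa$ tasks. Fixing a bijection $\kappa\times\kappa\cong\kappa$ and a pairing function, I schedule the tasks so that every task first available at stage $\alpha$ is processed at some stage $\beta+1\ge\alpha$.

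At limit stages I take unions (axiom \textsf{\footnotesize In}). At successor stage $\alpha+1$, apply \textsf{\footnotesize Ap} to the scheduled task $f:M\imp U_\alpha$ to get a strong embedding $h:M\imp N'$ extending $f$, with $U_\alpha\le N'$. Using the downward L\"owenheim--Skolem theorem pick an elementary substructure $N''\preceq N'$ containing $U_\alpha\cup h[M]$ with $|N''|\le|U_\alpha|+\lambda\le\kappa$. By \textsf{\footnotesize K0} $N''$ is a model; by \textsf{\footnotesize K1}--\textsf{\footnotesize K2} applied to the elementary inclusion $N''\hookrightarrow N'$, composition shows $\id:U_\alpha\imp N''$ and $h:M\imp N''$ are morphisms. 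Set $U_{\alpha+1}=N''$; then $|U_{\alpha+1}|\le\kappa$ and $U_\alpha\le U_{\alpha+1}$, so the chain continues.

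For richness, let $f:M\imp U$ be a morphism with $|f|<|M|\le\lambda$. Then $|\range f|=|f|<\lambda$. By K\"onig's theorem the hypothesis $\kappa=\kappa^{<\lambda}$ forces $\textrm{cf}(\kappa)\ge\lambda$, so $\range f\subseteq U_\alpha$ for some $\alpha<\kappa$. Composing $f:M\imp U$ with the inverse of $\id:U_\alpha\imp U$ (a morphism by \textsf{\footnotesize K2}) gives a morphism $f:M\imp U_\alpha$, i.e.\ a task at stage $\alpha$. By the schedule it is processed at some later stage, yielding a strong embedding $h:M\imp U_{\beta+1}$ extending $f$; composing with $U_{\beta+1}\le U$ produces the total extension required. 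The main obstacle is orchestrating the bookkeeping so that every task is eventually handled without breaking the cardinality bound $|U_\alpha|\le\kappa$, and extracting $\textrm{cf}(\kappa)\ge\lambda$ from the hypothesis, both of which are essential for the richness step to work.
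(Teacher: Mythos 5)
Your proposal is correct and follows essentially the same route as the paper: a continuous chain of length $\kappa$ built by dovetailing the at most $2^\lambda\cdot\kappa^{<\lambda}=\kappa$ tasks per stage (counted up to isomorphism), with \textsf{\footnotesize Ap} plus downward L\"owenheim--Skolem at successors, unions at limits, and the K\"onig-type cofinality argument $\textrm{cf}(\kappa)\ge\lambda$ to localize $\range f$ in some $U_\alpha$ at the end. The only (trivial) omission is arranging $|U|$ to be exactly $\kappa$ rather than $\le\kappa$, which the paper handles by assuming $|U_0|=\kappa$ at the outset.
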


\begin{proof} Let $U_0$ be given. We may assume $|U_0|=\kappa$. We define by induction a chain of models $\langle U_\alpha:\alpha < \kappa\rangle$ such that $|U_{\alpha}| = \kappa$ for all $\alpha < \kappa$. Let $U:= \bigcup_{\alpha <\kappa}  U_{\alpha}$.

At successor stage $\alpha+1$, let $f:M\imp U_\alpha$ be the least morphism---in a well-ordering that we specify below---such that $|f|<|M|\le\lambda$ and $f$ has no extension to a strong embedding $f':M\imp U_\alpha$. Apply \textsf{\footnotesize Ap} to obtain a strong embedding $f':M\imp U'$ that extends $f:M\imp U_\alpha$. By L\"owenheim-Skolem we may assume $|U_\alpha|=|U'|$. Let $U_{\alpha+1}=U'$. At stage $\alpha$ with $\alpha$ limit, simply let $U_\alpha := \bigcup_{\beta < \alpha}U_\beta$.
\
We choose the required well-ordering so that in the end we forget nobody. At each stage we well-order the isomorphism types of the morphisms $f:M\imp U_\alpha$ such that  $f<|M|\le\lambda$. The required well-ordering is obtained by dovetailing all these well-orderings.  The length of this enumeration is at most $2^\lambda\cdot\kappa^{<\lambda}$, which is $\kappa$ by hypothesis.

We check that $U$ is $\lambda$--rich. Suppose that $f:M\imp U$ is a morphism and $|f|<|M|\le\lambda$. Since
$\kappa^{cf\kappa}>\kappa$ for all $\kappa$, the cofinality of $\kappa$ is larger than $|f|$, hence $\range f\subseteq U_\alpha$ for some $\alpha<\kappa$. So $f:M\imp U_\alpha$ is a morphism and at some stage $\beta$ we have ensured the existence of an extension of $f:M\imp U_\alpha$ that embeds $M$ into $U_{\beta+1}$.
\end{proof}

Theorem~\ref{existence} is too general to yield a sharp bound on the cardinality of $U$. For instance, it cannot be used to infer the existence of countable rich models. However, it will enable us to define $\Tr$ for any inductive amalgamation class.

\begin{cor} 
Let $\lambda$ be an uncountable inaccessible cardinal. Then every model of cardinality $\le\lambda$ embeds in a rich model of cardinality $\lambda$.
\end{cor}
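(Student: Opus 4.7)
The plan is to iterate Theorem~\ref{existence} transfinitely, letting the richness parameter $\mu$ climb cofinally below $\lambda$ while keeping the target cardinality equal to $\lambda$, and then taking a union. The essential cardinal-arithmetic observation is that, since $\lambda$ is inaccessible, every $\mu<\lambda$ satisfies $2^\mu<\lambda$ and $\lambda^{<\mu}=\lambda$: indeed for $\nu<\mu<\lambda$, any function $\nu\to\lambda$ has bounded range by regularity, and each $\alpha^\nu$ with $\alpha<\lambda$ is below $\lambda$ by the strong-limit property, whence $\lambda^\nu=\lambda$. Thus the hypothesis $2^\mu\le\lambda=\lambda^{<\mu}$ of Theorem~\ref{existence} is met with its ``$\lambda$'' replaced by $\mu$ and its ``$\kappa$'' taken to be $\lambda$, so any model of size $\le\lambda$ strongly embeds into a $\mu$--rich model of size $\lambda$.

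Fix a cofinal strictly increasing sequence $\langle\mu_\alpha:\alpha<\lambda\rangle$ of uncountable cardinals below $\lambda$ and, starting from the given $U_0$ of cardinality $\le\lambda$, construct a chain $\langle U_\alpha:\alpha<\lambda\rangle$ of models of cardinality $\le\lambda$ as follows: at a successor step $\alpha+1$, use the preliminary to choose $U_{\alpha+1}\ge U_\alpha$ of cardinality $\lambda$ that is $\mu_\alpha$--rich; at a limit step take the union, which is a model of cardinality $\le\lambda$ by \textsf{\footnotesize In} and the regularity of $\lambda$. Let $U:=\bigcup_{\alpha<\lambda}U_\alpha$; a final application of \textsf{\footnotesize In} gives $U\in\Obj(\K)$ with $U_\alpha\le U$ for every $\alpha$, and clearly $|U|=\lambda$.

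To verify that $U$ is $\lambda$--rich, invoke the equivalent formulation for uncountable $\lambda$ noted just after the definition of rich model: it suffices to extend every morphism $f:M\imp U$ with $|M|<\lambda$ to a strong embedding $M\imp U$. Since $|\range f|<\lambda$ and $\lambda$ is regular, $\range f\subseteq U_\alpha$ for some $\alpha<\lambda$; applying \textsf{\footnotesize K2} to the strong embedding $\id_{U_\alpha}:U_\alpha\imp U$ and composing with $f$, one sees that $f:M\imp U_\alpha$ is itself a morphism. Choose $\beta\ge\alpha$ with $\mu_\beta\ge|M|$; then $f:M\imp U_{\beta+1}$ is a morphism, $\mu_\beta$--richness of $U_{\beta+1}$ supplies a strong embedding $h:M\imp U_{\beta+1}$ extending $f$, and composition with $U_{\beta+1}\le U$ delivers the required extension $h:M\imp U$. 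The only non-routine point is the cardinal-arithmetic preliminary; the rest is a standard chain construction approximating $\lambda$--richness by cofinally many instances of $\mu$--richness.
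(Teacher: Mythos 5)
Your argument is correct and supplies exactly the derivation the paper leaves implicit: a one-shot application of Theorem~\ref{existence} with $\kappa=\lambda$ is blocked by $2^\lambda\not\le\lambda$, so one must iterate it along a cofinal sequence of richness parameters $\mu_\alpha<\lambda$ (using the inaccessibility of $\lambda$ for the hypotheses $2^{\mu_\alpha}\le\lambda=\lambda^{<\mu_\alpha}$), take the union via \textsf{\footnotesize In}, and verify $\lambda$-richness through the reformulation in terms of $|M|<\lambda$ stated after the definition of rich model --- all of which you do. The only cosmetic point is that in the final step you should choose $\mu_\beta>|M|$ rather than $\mu_\beta\ge|M|$: the morphism $f:M\imp U_{\beta+1}$ obtained from the reformulated richness condition may have $|f|=|M|$, so extending it requires the same ``$|M|<\mu_\beta$'' form of $\mu_\beta$-richness rather than the literal clause $|f|<|M|\le\mu_\beta$.
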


We prefer to work with rich, rather than $\lambda$-rich, models. We assume the existence of as many inaccessible cardinals as needed.

\begin{thm}[Uniqueness]\label{uniqueness} 
Let $U$ and $V$ be $\lambda$--rich models. Then any morphism $f:U\imp V$ is an elementary map. When $|f|<|U|=|V|=\lambda$, $f$ can be extended to an isomorphism.
\end{thm}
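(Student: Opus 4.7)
The plan is to prove both parts of the theorem through a single back-and-forth argument on the family
\[F\ =\ \bigl\{\,g:U\imp V\ \bigm|\ g\ \text{is a morphism and}\ |g|<\lambda\,\bigr\}.\]

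\textbf{Step 1 (back-and-forth for $F$).} This is the only technical step. Given $g\in F$ and $a\in U$, I want a morphism $g'\in F$ with $g\subseteq g'$ and $a\in\dom g'$. Richness of $V$ cannot be applied directly to $g$ because $U$ is too large; the trick is to replace $U$ by a small elementary submodel. Using the downward L\"owenheim--Skolem theorem, pick $M\prec U$ with $\dom g\cup\{a\}\subseteq M$ and $|M|=\lambda$. By \textsf{\footnotesize K0}, $M$ is a model, and by \textsf{\footnotesize K1}, $\id_M:M\imp U$ is a morphism, so $M\le U$. The composition $g\circ\id_M$ is a morphism $M\imp V$ of size $|g|<\lambda=|M|$, so $\lambda$-richness of $V$ extends it to a total strong embedding $h:M\imp V$. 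Restricting $h$ to $\dom g\cup\{a\}$ and precomposing with $\id_M^{-1}:U\imp M$ (a morphism by \textsf{\footnotesize K2}) produces the desired $g'$; it lies in $F$ by \textsf{\footnotesize R}. The back direction---given $b\in V$, extend $g$ so that $b\in\range g'$---is the same argument applied to $g^{-1}:V\imp U$, which is a morphism by \textsf{\footnotesize K2}.

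\textbf{Step 2 (elementarity of $f$).} A standard induction on formula complexity shows every $g\in F$ is elementary: atomic formulas are handled by the definition of partial embedding, Boolean operations are trivial, and the existential case is exactly where the back-and-forth of Step 1 is used. Now let $f:U\imp V$ be any morphism, without size restriction, and let $a\subseteq\dom f$ be a finite tuple. By \textsf{\footnotesize R}, $f\restriction a$ is a morphism; it is finite and so lies in $F$. Hence $U\models\phi(a)\IFF V\models\phi(fa)$ for every formula $\phi$, proving that $f$ itself is elementary.

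\textbf{Step 3 (extension to an isomorphism when $|f|<|U|=|V|=\lambda$).} Enumerate $U$ and $V$ in order type $\lambda$ and build a chain $f=f_0\subseteq f_1\subseteq\cdots\subseteq f_\alpha\subseteq\cdots$ in $F$ by transfinite recursion of length $\lambda$: at each successor stage alternate between the forth and back halves of Step 1 to add the next element of $U$ to the domain or the next element of $V$ to the range; at a limit stage $\beta<\lambda$ set $f_\beta=\bigcup_{\alpha<\beta}f_\alpha$. Axiom \textsf{\footnotesize Fc} (Theorem~\ref{FC}) guarantees $f_\beta$ is a morphism---each finite restriction sits inside some earlier $f_\alpha$---and the cardinal estimate $|f_\beta|\le|f_0|+|\beta|<\lambda$ keeps $f_\beta$ in $F$, so Step 1 remains applicable. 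The final union $\bigcup_{\alpha<\lambda}f_\alpha$ is total on $U$ and surjects onto $V$; \textsf{\footnotesize Fc} applied once more, together with \textsf{\footnotesize K2} applied to its inverse, shows it is a strong embedding in both directions, hence an isomorphism extending $f$.

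The main obstacle is Step 1: once the L\"owenheim--Skolem/richness combination is correctly set up, with \textsf{\footnotesize K0}, \textsf{\footnotesize K1}, \textsf{\footnotesize K2}, and \textsf{\footnotesize R} each playing their small role, Steps 2 and 3 are entirely routine. The one further subtlety in Step 3 is that $F$ must be closed under unions of chains of length ${<}\lambda$, which is exactly what \textsf{\footnotesize Fc} provides.
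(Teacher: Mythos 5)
Your overall strategy --- a one-step extension lemma obtained by combining downward L\"owenheim--Skolem with the richness of the target, followed by an induction on formula complexity and a transfinite back-and-forth --- is exactly the argument the paper has in mind (its own proof is a two-line sketch of the same back-and-forth, with the details left to the reader). Steps 1 and 2 are correct: passing to $M\prec U$ with $\dom g\cup\{a\}\subseteq M$ and $|M|=\lambda$ so that $\lambda$-richness of $V$ becomes applicable is the right move, and the roles you assign to \textsf{\footnotesize K0}, \textsf{\footnotesize K1}, \textsf{\footnotesize K2} and \textsf{\footnotesize R} are all as intended.

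The one genuine flaw is the appeal to \textsf{\footnotesize Fc} in Step 3. Finite character is not an axiom of the class: it is Theorem~\ref{FC}, which is stated as a \emph{consequence} of the axioms, is proved after the Uniqueness theorem, and whose proof invokes Uniqueness (it embeds everything into a rich model $U$ and uses the fact that morphisms $U\imp U$ are elementary). Citing it inside the proof of Uniqueness is therefore circular. The repair is immediate and is exactly what the paper does at limit stages: by your Step 2 every member of $F$ is elementary, the union of a chain of elementary maps is elementary (elementarity is a condition on finite tuples), and elementary maps are morphisms by \textsf{\footnotesize K1}; hence $f_\beta$ remains in $F$ at limit $\beta$, and the final union is an elementary bijection from $U$ onto $V$, i.e.\@ an isomorphism extending $f$. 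With that substitution in place of \textsf{\footnotesize Fc} your proof is complete; as written, the logical dependency runs the wrong way.
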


\begin{proof} To prove that $f:U\imp V$ is elementary, it suffices to prove that all its finite restrictions are elementary. Therefore we may assume that $f$ itself is finite. Now extend $f$ by back-and-forth to an isomorphism between countable elementary substructures of $U$ and $V$ and the claim is proved. The details are left to the reader.

To prove the second part of the claim, we extend $f:U\imp V$ by back-and-forth, taking care to ensure totality and surjectivity. At limit stages we can safely take unions, since by the first part of the theorem morphisms between $U$ and $V$ are elementary.
\end{proof}

There is a morphism between $U$ and $V$ only if the two models belong to the same connected component. Therefore in each connected component there is at most one rich model of given cardinality.

\begin{cor}[Homogeneity]\label{homogenerity} 
Rich models are homogeneous in the sense that every morphism $f:U\imp U$ of cardinality $<|U|$ has an extension to an automorphism of $U$.
\end{cor}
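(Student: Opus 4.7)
The plan is to derive this as an immediate application of the Uniqueness theorem (Theorem~\ref{uniqueness}) with $V=U$. Since $U$ is rich, by definition it is $\lambda$-rich for $\lambda=|U|$, so the hypotheses of the second clause of Uniqueness are satisfied as soon as $|f|<|U|$.

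More precisely, I would argue as follows. Let $U$ be a rich model and let $f:U\imp U$ be a morphism with $|f|<|U|$. Set $\lambda:=|U|$ and $V:=U$. Then $U$ and $V$ are both $\lambda$-rich models with $|U|=|V|=\lambda$, and $f:U\imp V$ is a morphism satisfying $|f|<|U|=|V|=\lambda$. Theorem~\ref{uniqueness} then directly produces an extension of $f:U\imp V$ to an isomorphism $h:U\imp V$, and since $V=U$ this isomorphism is precisely an automorphism of $U$ extending $f$.

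There is essentially no obstacle here: all the work, namely the back-and-forth construction, was already carried out in the proof of Theorem~\ref{uniqueness}. The corollary is merely the specialization $U=V$ of the second clause of that theorem. One small thing to mention for the reader is that the statement is non-vacuous only because rich models of a given cardinality exist within each connected component (as noted in the paragraph preceding the corollary), so there are genuine morphisms $f:U\imp U$ of small cardinality to which the corollary applies --- for instance, any finite partial elementary map of $U$ into itself.
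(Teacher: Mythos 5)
Your proposal is correct and is exactly the argument the paper intends: the corollary is stated without proof as an immediate consequence of Theorem~\ref{uniqueness}, obtained by taking $V=U$ and $\lambda=|U|$ in the second clause of that theorem. Nothing further is needed.
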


\begin{thm}[Finite character]\label{FC}  The map $f:M\imp N$ is a morphism if and only if $h:M\imp N$ is a morphism for every finite $h\subseteq f$.
\end{thm}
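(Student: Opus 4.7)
The forward direction is immediate from \textsf{\footnotesize R}, so assume every finite restriction of $f\colon M \imp N$ is a morphism; we must show $f$ itself is.

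My plan is to embed $M$ and $N$ as strong submodels of a common rich model $U$, pull back every finite restriction of $f$ to a finite morphism $U \imp U$, use homogeneity to upgrade each such morphism to a restriction of an automorphism (hence to an elementary map), and conclude that $f$ itself, viewed as a map $U \imp U$, is elementary. Axiom \textsf{\footnotesize K1} then turns this into a morphism, and composing with the inclusion morphisms recovers $f\colon M \imp N$.

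For the set-up, observe that the empty map is a finite restriction of $f$ and hence a morphism $M \imp N$, so $M$ and $N$ lie in the same connected component; combining \textsf{\footnotesize Jep} with Theorem~\ref{existence} yields a rich model $U$ with $M, N \le U$ and $|M|, |N| < |U|$. For each finite $h \subseteq f$, composing $\id_M^{-1}\colon U \imp M$ (a morphism by \textsf{\footnotesize K2} applied to the strong embedding $\id_M\colon M \imp U$), $h\colon M \imp N$, and $\id_N\colon N \imp U$ gives a finite morphism $\tilde h\colon U \imp U$ whose underlying function is $h$. By Corollary~\ref{homogenerity}, $\tilde h$ extends to an automorphism of $U$, so $\tilde h$ is an elementary map in $U$. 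Applied to $h = f \mathord\restriction \bar a$ for an arbitrary finite tuple $\bar a \subseteq \dom f$, this gives $U \models \phi(\bar a) \IFF U \models \phi(f\bar a)$ for every formula $\phi$. Hence $f\colon U \imp U$ is elementary and, by \textsf{\footnotesize K1}, a morphism; pre-composing with $\id_M\colon M \imp U$ and post-composing with $\id_N^{-1}\colon U \imp N$ then recovers $f\colon M \imp N$ as a morphism.

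The main obstacle is conceptual bookkeeping: ``morphism'' depends on the function together with the choice of domain and co-domain, so the same function $f$ appears successively as a map $M \imp N$, $M \imp U$, $U \imp U$, and back, and one must verify at each step that morphism-status is preserved by the appropriate appeal to \textsf{\footnotesize R}, \textsf{\footnotesize K2}, \textsf{\footnotesize K1}, and composition in the category.
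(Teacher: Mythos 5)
Your proof is correct and follows essentially the same route as the paper's: embed $M,N$ as strong submodels of a common rich model $U$, use Theorem~\ref{uniqueness} (via its Corollary~\ref{homogenerity} in your case) to see that each finite restriction is elementary in $U$, conclude that $f:U\imp U$ is elementary and hence a morphism, and recover $f:M\imp N$ by composing with the inclusion morphisms. Your version is merely more explicit about the bookkeeping (in particular, why $M$ and $N$ admit a common rich extension, and that the relevant axiom for turning an elementary map into a morphism is \textsf{\footnotesize K1}), details the paper's proof glosses over.
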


\begin{proof} One direction is axiom R. For the converse, suppose that for every finite $h\subseteq f$ the map $h:M\imp N$ is a morphism. By Theorem~\ref{existence} we may assume $M,N\le U$ for some rich model $U$. Then $h:U\imp U$ is a morphism and, by Theorem~\ref{uniqueness}, elementary. So $f$ is also elementary on $U$, hence it is a morphism by K2. Since $M,N\le U$, the map $f:M\imp N$ is a morphism because it is a composition of morphisms.
\end{proof}

A \emph{chain of morphisms\/} is a sequence of morphisms $f_\alpha:M_\alpha\imp N_\alpha$, where the $\alpha$--th morphism extends the $\beta$-th morphism for every $\beta<\alpha$. The following is an immediate consequence of the finite character of morphisms.

\begin{cor}\label{unchain} 
The union of a chain of morphisms is a morphism that extends every element of the chain.
\end{cor}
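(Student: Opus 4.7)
The plan is to apply the Finite Character theorem (Theorem~\ref{FC}). First, let $\langle f_\alpha : M_\alpha \imp N_\alpha \rangle_{\alpha < \lambda}$ be the given chain. By the definition of extension, $\langle M_\alpha \rangle$ and $\langle N_\alpha \rangle$ are chains of models, so axiom \textsf{\footnotesize In} gives that $M := \bigcup_\alpha M_\alpha$ and $N := \bigcup_\alpha N_\alpha$ are models with $M_\alpha \le M$ and $N_\alpha \le N$ for every $\alpha$. Set $f := \bigcup_\alpha f_\alpha$ as a function, and consider the map $f : M \imp N$.

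To show $f : M \imp N$ is a morphism, by Theorem~\ref{FC} it suffices to verify that every finite $h \subseteq f$ is a morphism $h : M \imp N$. Fix such an $h$. Since $h$ is finite and the $f_\alpha$ form a chain, there is some $\alpha$ with $h \subseteq f_\alpha$. By axiom \textsf{\footnotesize R} applied to $f_\alpha : M_\alpha \imp N_\alpha$, we get that $h : M_\alpha \imp N_\alpha$ is a morphism.

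The remaining point — the one mild subtlety, since per the paper's convention a map is a triple — is to transfer $h$ from the pair $(M_\alpha, N_\alpha)$ to the pair $(M, N)$. Because $M_\alpha \le M$, the map $\id_{M_\alpha} : M_\alpha \imp M$ is a morphism, and by \textsf{\footnotesize K2} its inverse $\id_{M_\alpha} : M \imp M_\alpha$ is a morphism as well. Likewise $\id_{N_\alpha} : N_\alpha \imp N$ is a morphism. Composing $\id_{M_\alpha} : M \imp M_\alpha$, then $h : M_\alpha \imp N_\alpha$, then $\id_{N_\alpha} : N_\alpha \imp N$, we obtain exactly the map $h : M \imp N$, and it is a morphism as a composition of morphisms.

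Thus $f : M \imp N$ is a morphism by \textsf{\footnotesize Fc}. Finally, $f$ extends each $f_\alpha$ in the sense defined in Section~\ref{definitions}, since $f_\alpha \subseteq f$ and we already noted $M_\alpha \le M$ and $N_\alpha \le N$. The main work is carrying out the transfer step above cleanly given the triple-convention for maps; no deeper obstacle is involved.
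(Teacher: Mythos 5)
Your proof is correct and takes exactly the route the paper intends: the paper simply declares the corollary ``an immediate consequence of the finite character of morphisms,'' and you have filled in the details (using \textsf{\footnotesize In} for the union models, \textsf{\footnotesize R} to restrict to a finite $h\subseteq f_\alpha$, and composition with the identity morphisms plus \textsf{\footnotesize K2} to re-type $h$ as a map $M\imp N$ before applying Theorem~\ref{FC}). The transfer step you flag is handled correctly and is indeed the only point requiring care.
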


\begin{cor}\label{notAEC} 
Let $\langle M_\alpha : \alpha < \lambda \rangle$ be a chain of models. Let $M_\lambda:= \bigcup_{\alpha < \lambda} M_\alpha$. If $N$ is a model such that $M_\alpha\le N$ for every $\alpha<\lambda$ then $M_\lambda\le N$.
\end{cor}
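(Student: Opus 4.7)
The plan is to realise the identity map $\id_{M_\lambda}:M_\lambda\imp N$ as the union of a chain of morphisms and then invoke Corollary~\ref{unchain}.

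First, for each $\alpha<\lambda$ the hypothesis $M_\alpha\le N$ gives that $\id_{M_\alpha}:M_\alpha\imp N$ is a morphism. I would verify that the sequence $\langle \id_{M_\alpha}:M_\alpha\imp N \,:\,\alpha<\lambda\rangle$ is a chain of morphisms in the sense defined just before Corollary~\ref{unchain}. For $\beta<\alpha$, being an extension requires $M_\beta\le M_\alpha$, which holds because $\langle M_\alpha:\alpha<\lambda\rangle$ is a chain of models; $N\le N$, which is trivial; and $\id_{M_\beta}\subseteq \id_{M_\alpha}$ as set-theoretic functions, which is clear.

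Next, I would identify the union of this chain. As a function it is $\id_{M_\lambda}$; axiom \textsf{\footnotesize In} ensures that $M_\lambda=\bigcup_{\alpha<\lambda}M_\alpha$ is a model with $M_\alpha\le M_\lambda$ for every $\alpha$, while the codomain is constantly $N$. Hence the union is naturally the triple $\id_{M_\lambda}:M_\lambda\imp N$, and Corollary~\ref{unchain} declares it a morphism. Since this morphism is total, unwinding the definition of $\le$ yields $M_\lambda\le N$, as required.

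The only real subtlety is notational: because the paper treats a map as a triple including both source and target structures, one must be explicit about what the ``union'' is as a triple. Once axiom \textsf{\footnotesize In} is used to pin down the domain and the constant chain pins down the codomain, the argument is just one invocation of Corollary~\ref{unchain}, so I do not expect a genuine obstacle. (Alternatively, one could argue directly from Theorem~\ref{FC}: any finite restriction of $\id_{M_\lambda}$ lies inside some $\id_{M_\alpha}$ and is therefore a morphism, but phrasing this without shifting source structures is awkward, and the chain-union route avoids the issue.)
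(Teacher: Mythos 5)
Your proof is correct and follows essentially the same route as the paper, whose entire proof is the citation ``By \ref{FC} and \ref{unchain}''; you have simply spelled out the chain-of-morphisms verification and the role of axiom \textsf{\footnotesize In} that the paper leaves implicit.
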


\begin{proof} 
By \ref{FC} and \ref{unchain}.
\end{proof}

Since $\lambda$--rich models are $\omega$--rich, the following corollary of Theorem~\ref{uniqueness} is immediate.

\begin{cor} 
In each connected component, all rich models have the same theory and this is also the theory of $\lambda$--rich models, for any $\lambda$.
\end{cor}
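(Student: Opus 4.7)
The plan is to reduce to the case of $\omega$-rich models and then apply Theorem~\ref{uniqueness} to the empty morphism. First I would verify the hint in the excerpt: whenever $\lambda\ge\omega$, every $\lambda$-rich model is $\omega$-rich, because any morphism $f:M\imp U$ with $|f|<|M|\le\omega$ automatically satisfies $|M|\le\lambda$, and so $\lambda$-richness already supplies the required total extension. In particular every rich model $U$, being $|U|$-rich with $|U|\ge\omega$, is $\omega$-rich. Hence it suffices to prove that any two $\omega$-rich models in the same connected component are elementarily equivalent.

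Next I would take $\omega$-rich models $U$ and $V$ in a single connected component. By the definition of connectedness, some morphism between them exists, so applying axiom \textsf{\footnotesize R} to the inclusion $\emptyset\subseteq(\textrm{that morphism})$ shows that the empty map $\emptyset:U\imp V$ is itself a morphism. Theorem~\ref{uniqueness} with $\lambda=\omega$ then tells me that $\emptyset:U\imp V$ is an elementary map. Reading the definition of an elementary map on the empty tuple, this says precisely $U\models\phi\IFF V\models\phi$ for every $L$-sentence $\phi$, i.e.\ $\Th(U)=\Th(V)$.

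There is essentially no obstacle: the only point to keep in mind is that \textsf{\footnotesize R} legitimises the empty map as a morphism between the specific pair $(U,V)$, and by \textsf{\footnotesize K2} a morphism in either direction suffices to trigger this. Combining the two paragraphs, rich models and $\lambda$-rich models of a given connected component are all $\omega$-rich and hence share one common complete theory, which is the statement of the corollary.
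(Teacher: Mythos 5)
Your proof is correct and follows exactly the route the paper intends: the paper gives no written proof, merely noting that $\lambda$--rich models are $\omega$--rich and that the corollary is then immediate from Theorem~\ref{uniqueness}, which is precisely the reduction and application (via the empty morphism) that you carry out.
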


Let $\Tr$ be the set of sentences that hold in every rich model of the class $\K$. This is called the \emph{theory of the rich models} and it is complete if and only if $\K$ is connected (by Theorem~\ref{uniqueness}).

\section{Saturation}%
\label{saturation}

In this section we show that the saturation of rich models is an intrinsic property of an amalgamation class. This generalizes Proposition~10 in \cite{lascar} or also Theorem 2.5 of~\cite{kuelas}. We also isolate a natural property, which we call {\it fullness}, and show that it does not hold in general (but it holds trivially in all connected amalgamation classes). In the next section, we shall use this property to obtain another characterization of the saturation of rich models.

We fix an inductive amalgamation class $\K$.

\begin{thm} \label{lascar1} Assume that $\K$ is connected. The following are equivalent:
\begin{itemize}
\item[1.] some $\lambda$--rich model is $\lambda$--saturated;
\item[2.] all $\lambda$--rich models are $\lambda$--saturated;
\item[3.] every $\lambda$--saturated model $M\models\Tr$ is $\lambda$--rich.
\end{itemize}
\end{thm}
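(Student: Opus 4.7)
The implication (2)$\IMP$(1) is immediate since $\lambda$-rich models exist by Theorem~\ref{existence}. I close the cycle via (1)$\IMP$(3)$\IMP$(2). Connectedness is used only through its consequence that $\Tr$ is complete, so that any $\lambda$-saturated $M\models\Tr$ satisfies $M\equiv U$ for every rich $U$.

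For (1)$\IMP$(3) fix a $\lambda$-rich, $\lambda$-saturated $U$ and a $\lambda$-saturated $M\models\Tr$. Given a morphism $f:M'\imp M$ with $|f|<|M'|\le\lambda$, I construct a total extension. Because $M\equiv U$, $|\range f|<\lambda$, and $U$ is $\lambda$-saturated, iterating the realization of $1$-types produces an elementary map $g:M\imp U$ with $\dom g\supseteq\range f$. By \textsf{\footnotesize K1}, $g$ is a morphism, so $g\circ f:M'\imp U$ is also a morphism, still of size $<|M'|\le\lambda$. By $\lambda$-richness of $U$, extend $g\circ f$ to a total morphism $h:M'\imp U$. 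Symmetrically, $\lambda$-saturation of $M$ extends the elementary map $g^{-1}$ to an elementary map $e:U\imp M$ with $\dom e\supseteq h[M']$, and by \textsf{\footnotesize K1}, $e$ is a morphism. Then $\hat f:=e\circ h:M'\imp M$ is a total morphism, and for $a\in\dom f$ one computes $\hat f(a)=e(h(a))=e(g(f(a)))=g^{-1}(g(f(a)))=f(a)$, so $\hat f$ extends $f$ and $M$ is $\lambda$-rich.

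For (3)$\IMP$(2) fix a $\lambda$-rich $U$ and a consistent type $p(x)$ over $A\subseteq U$ with $|A|<\lambda$. Pick a $\lambda$-saturated elementary extension $N\succeq U$, which models $\Tr$ and, by~(3), is $\lambda$-rich. Let $b\in N$ realize $p$ and choose $M'\preceq N$ with $A\cup\{b\}\subseteq M'$ and $|A|<|M'|\le\lambda$. From $U\preceq N$ and $M'\preceq N$, \textsf{\footnotesize K1} gives $U\le N$ and $M'\le N$; then \textsf{\footnotesize K2} and \textsf{\footnotesize R} show that the restriction of $\id_U^{-1}\circ\id_{M'}$ to $A$ is a morphism $\id_A:M'\imp U$. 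By $\lambda$-richness of $U$, extend it to a total morphism $h:M'\imp U$. Amalgamating the strong embeddings $h:M'\imp U$ and $\id_{M'}:M'\imp N$ via \textsf{\footnotesize Ap$'$} yields a model $N'$ and strong embeddings $j_U:U\imp N'$, $j_N:N\imp N'$ satisfying $j_U\circ h=j_N\mathord\restriction M'$. Embed $N'$ into a $\lambda$-rich $N''$ via Theorem~\ref{existence}. Since $U$, $N$ and $N''$ are all $\lambda$-rich, Theorem~\ref{uniqueness} makes the induced strong embeddings $U\imp N''$ and $N\imp N''$ elementary. Combining $j_U(h(b))=j_N(b)$ with $j_U\mathord\restriction A=j_N\mathord\restriction A$ (from $h\mathord\restriction A=\id_A$) yields $\tp^U(h(b)/A)=\tp^N(b/A)\supseteq p$, so $h(b)\in U$ realizes $p$ and $U$ is $\lambda$-saturated.

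The principal difficulty is the round-trip in (1)$\IMP$(3): the $\lambda$-richness of $U$ must be transported back to $M$ while preserving $f$. This forces the asymmetric construction---first $g$ chosen to cover $\range f$, then $e$ extending $g^{-1}$ to cover $h[M']$---so that $e\circ h$ restricts to $f$ on $\dom f$. The subsidiary extensions of elementary maps are standard saturation arguments whose parameter sets stay of size $<\lambda$ at every intermediate stage, so $\lambda$-saturation of the target suffices.
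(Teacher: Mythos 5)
Your proposal is correct, but it closes the cycle in the opposite direction from the paper: the paper proves $1\IMP2\IMP3\IMP1$ (with $3\IMP1$ trivial), whereas you prove $2\IMP1$ trivially and then $1\IMP3\IMP2$. Your $1\IMP3$ is essentially the paper's $2\IMP3$, except that you observe it only needs the single $\lambda$--saturated $\lambda$--rich model supplied by (1) together with connectedness (to get $M\equiv U$), rather than the full strength of (2); the mechanism --- push $\range f$ into $U$ by an elementary $g$, extend $g\circ f$ by $\lambda$--richness, pull back by an elementary $e\supseteq g^{-1}$ defined on all of $h[M']$ --- is the same, and you correctly flag that the parameter sets stay of size $<\lambda$ at every stage of the transfinite construction of $g$ and $e$, which is exactly what makes $\lambda$--saturation realize types in up to $\lambda$ variables here. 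The genuinely different step is your $3\IMP2$: the paper transfers saturation directly from one $\lambda$--rich model to another by shuttling between small elementary substructures, while you pass to a $\lambda$--saturated elementary extension $N\succeq U$, invoke (3) to make $N$ $\lambda$--rich, pull the realization of $p$ back into $U$ by $\lambda$--richness applied to $\id_A:M'\imp U$, and then certify that the pulled-back element has the right type by amalgamating $h$ and $\id_{M'}$ via \textsf{\footnotesize Ap$'$}, embedding the amalgam into a $\lambda$--rich $N''$ via Theorem~\ref{existence}, and applying Theorem~\ref{uniqueness} to make both induced strong embeddings elementary. This costs two extra invocations (\textsf{\footnotesize Ap$'$} and Theorem~\ref{existence}) compared with the paper's direct back-and-forth, but it isolates cleanly where hypothesis (3) enters, and it makes the $2\IMP1$ direction free. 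Both arguments ultimately rest on the same three pillars: completeness of $\Tr$ from connectedness, Theorem~\ref{uniqueness} (morphisms between rich models are elementary), and the interplay of $\lambda$--saturation with $\lambda$--richness.
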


\begin{proof}  We prove $1\IMP2$. Let $U$ be a $\lambda$--rich and $\lambda$--saturated model. Let $V$ be $\lambda$--rich. We shall use the fact that every morphism between $U$ and $V$, or between elementary substructures of them, is an elementary map. This a consequence of Theorem \ref{uniqueness}. Let $a\in V$ be a tuple of length $<\lambda$. Let $x$ be a finite tuple of variables. We claim that any type $p(x,a)$ is realized in $V$. Let $V'$ be a model of cardinality $\le\lambda$ such that $a\in V'\preceq V$. Since $\K$ is connected there is an elementary embedding $f:V'\imp U$. Let $c$ be such that $U\models p(c,fa)$. Let $U'$ be a model of cardinality $\le\lambda$ such that $fa,c\in U'\preceq U$. Let $h:U'\imp V$ be an elementary embedding that extends $f^{-1}:U'\imp V$. Then $hc$ is the required realisation of $p(x,a)$ in $V$.

To prove $2\IMP3$, assume that $M$ is a $\lambda$--saturated model such that $M\models\Tr$. Let $U$ be a $\lambda$--rich model such that $|U| > |M|$. Let $f:N\imp M$ be a morphism, where $|f|<|N|\le\lambda$. We claim that $f$ can be extended to a strong embedding. Let $M'$ be a structure of cardinality $\le\lambda$ such that  $\range f \subseteq M'\preceq M$. As $\Tr$ is a complete theory, $U\equiv M'$ and, by $\lambda$--saturation, there is an elementary embedding $g:M'\imp U$. By $\lambda$--richness, there is a morphism $h:N\imp U$ that extends $gf:N\imp U$. As $M$ is $\lambda$--saturated, there is an elementary embedding $k:h[N]\imp M$. Then $k:U\imp M$ is a morphism, so $kh:N\imp M$ is the required embedding.

Finally, the implication $3\IMP1$ is clear.\end{proof}

An analogous theorem holds for saturated rich models. The proof is similar.

\begin{thm}\label{oneallsaturated} Assume that $\K$ is connected. The following are equivalent:
\begin{itemize}
\item[1.]   some rich model is saturated;
\item[2.]   all rich models are saturated;
\item[3.]   every saturated model $M\models\Tr$ is rich.
\end{itemize}
\end{thm}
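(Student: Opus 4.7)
The plan is to mirror the proof of Theorem~\ref{lascar1}, with ``rich'' now interpreted as ``$|U|$-rich'' and ``saturated'' as ``$|U|$-saturated''. As in that proof, the key ingredient is that every morphism between rich models---and, more generally, between an elementary substructure of one rich model and another rich model---is elementary. This follows from Theorem~\ref{uniqueness} combined with axioms \textsf{K1}--\textsf{K2} and \textsf{R}, by composing with the elementary inclusions of the relevant elementary substructures into their rich ambient models (and their inverses).

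For $1\IMP2$, let $U$ be rich and saturated, let $V$ be rich, and let $a\in V$ be a tuple of length $<|V|$. I would realise an arbitrary type $p(x,a)$ in $V$ as follows. Choose $V'\preceq V$ of sufficiently small cardinality containing $a$ (by L\"owenheim--Skolem); use connectedness together with the richness of $U$ to obtain an elementary embedding $f:V'\imp U$; realise $p(x,fa)$ in $U$ at some element $c$ by saturation of $U$; choose $U'\preceq U$ of small cardinality containing $fa$ and $c$; and use the richness of $V$ to extend $f^{-1}\mathord\restriction fa:U\imp V$ to a strong embedding $h:U'\imp V$. Since $h$ is elementary by the uniform observation, $hc$ realises $p(x,a)$ in $V$.

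For $2\IMP3$, let $M\models\Tr$ be saturated and let $f:N\imp M$ be a morphism with $|f|<|N|\le|M|$. By Theorem~\ref{existence}, pick a rich $U$ with $|U|>|M|$, which is saturated by hypothesis. Since $\Tr$ is complete, $U\equiv M'$ for any $M'\preceq M$ of cardinality $\le|M|$ containing $\range f$, so saturation of $U$ gives an elementary embedding $g:M'\imp U$. Richness of $U$ extends $gf:N\imp U$ to a strong embedding $h:N\imp U$. Pick $U'\preceq U$ of cardinality $\le|M|$ containing $h[N]$; saturation of $M$ extends the partial elementary map $g^{-1}\mathord\restriction gf[\dom f]:U\imp M$ to an elementary embedding $k:U'\imp M$, and $kh:N\imp M$ is the required strong embedding extending $f$.

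Finally, $3\IMP1$ is immediate: a saturated model of $\Tr$ exists at any sufficiently large (e.g., inaccessible) cardinal, and by~3 it is rich. The main obstacle is the cardinality bookkeeping, since ``rich'' and ``saturated'' each refer to the model's own cardinality: the intermediate elementary substructures $V'$, $U'$, $M'$ must be chosen carefully so that each invocation of richness or saturation applies to a set of admissible size, and some care is needed in $1\IMP2$ to reconcile $|U|$ with $|V|$ in the choice of $V'$.
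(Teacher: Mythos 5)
Your overall plan is the one the paper intends (its own proof is literally ``the proof is similar'' to Theorem~\ref{lascar1}), and your adaptations of $2\IMP3$ and $3\IMP1$ are sound: in $2\IMP3$ all the auxiliary structures have cardinality $\le|M|<|U|$ and the partial maps have cardinality $<|M|$, so every appeal to $|U|$-richness, $|U|$-saturation and $|M|$-saturation is legitimate. However, in $1\IMP2$ there is a genuine gap that is not mere bookkeeping. Here ``saturated'' means $|U|$-saturated and ``rich'' means $|V|$-rich, and nothing forces $|V|\le|U|$. If $|V|>|U|$, a tuple $a\in V$ of length $<|V|$ may have length $\ge|U|$; then no elementary substructure $V'\preceq V$ containing $a$ can have cardinality $\le|U|$, so it cannot be embedded into $U$, and even if it could, the type $p(x,fa)$ is over a parameter set of size $\ge|U|$ and so cannot be realised in $U$ by $|U|$-saturation. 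No choice of $V'$ repairs this; the direct mirroring of Theorem~\ref{lascar1} only proves that $V$ is $|U|$-saturated, not saturated.

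The missing idea is to pass through a cardinality-free intermediate statement, essentially condition~4 of Theorem~\ref{allsaturatedmc}: from the saturation of the single model $U$ one first shows that every strong embedding $h:N\imp W$ into a rich model $W$ with $N\equiv W$ is elementary. (Sketch: for $N$ countable, strongly embed $N$ into $U$ using connectedness and richness of $U$; the image is $\le U$ and elementarily equivalent to $U$, hence $\preceq U$ by the saturation of $U$ together with Theorem~\ref{uniqueness}; then compare the two copies of a finite tuple via a finite morphism between the rich models $U$ and $W$, which is elementary by Theorem~\ref{uniqueness}. The general case follows by restricting to countable elementary substructures and finite character.) With this in hand, saturation of an arbitrary rich $V$ is obtained without reference to $|U|$: given a type $p$ over $A\subseteq V$ with $|A|<|V|$, take $A\subseteq V'\preceq V$ with $|V'|<|V|$, realise $p$ in some $N\succeq V'$ with $|V'|<|N|\le|V|$, note that $\id_{V'}:N\imp V$ is an elementary map hence a morphism of cardinality $<|N|\le|V|$, extend it by richness of $V$ to a strong embedding $h:N\imp V$, and conclude because $h$ is elementary. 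Your proposal senses the difficulty (``some care is needed in $1\IMP2$'') but does not supply this extra step, and without it the argument fails for rich models larger than the given saturated one.
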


When $\K$ is not connected these results hold within each connected component.


\begin{thm}\label{allsaturatedmc} Let $\lambda$ be any infinite cardinal. The following are equivalent:
\begin{itemize}
\item[1.]  all $\lambda$--rich models are $\lambda$--saturated;
\item[2.]  all rich models are saturated;
\item[3.]  if $U$ is rich, $M\equiv U$, and $M\le U$, then $M\preceq U$;
\item[4.]  if $U$ is rich, $M\equiv U$,  then any morphism $f:M\imp U$ is elementary.
\end{itemize}
\end{thm}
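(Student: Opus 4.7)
The plan is to establish the cycle $1\IMP 4\IMP 2\IMP 1$ together with the easy equivalence $3\IFF 4$. Every arrow in the cycle hinges, at its crucial step, on Theorem~\ref{uniqueness}, which turns a morphism between two $\lambda$--rich models into an elementary map.

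The equivalence $3\IFF 4$ is essentially a reformulation. For $4\IMP 3$, if $M\le U$ and $M\equiv U$ then $\id_M:M\imp U$ is a morphism by definition of $\le$, so by~4 it is elementary, giving $M\preceq U$. For $3\IMP 4$, given a morphism $f:M\imp U$ with $M\equiv U$ and $U$ rich, use \textsf{\footnotesize Ap} to extend $f$ to a strong embedding $h:M\imp V_0$ with $U\le V_0$, then embed $V_0$ into a rich $V$ using Theorem~\ref{existence} (at a sufficiently large inaccessible). Since $U,V$ are rich in the same connected component, they share a complete theory, so $h[M]\cong M\equiv U\equiv V$. Condition~3 applied to $h[M]\le V$ and to $U\le V$ gives $h[M]\preceq V$ and $U\preceq V$, so for $\bar a\in\dom f$, $\tp^M(\bar a)=\tp^V(h\bar a)=\tp^V(f\bar a)=\tp^U(f\bar a)$.

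The implications $4\IMP 2$ and $2\IMP 1$ share a common shape: one uses saturation to land a realisation of $p$ inside $U$ (resp.\ $U_0$) via a morphism that is then shown to be elementary. For $4\IMP 2$, let $U$ be rich of cardinality $\kappa$ and $p(x,\bar a)$ a consistent type with $|\bar a|<\kappa$; realise $p$ in some $M\succeq U$ by $c$ and pick $M'\preceq M$ of size $<\kappa$ containing $\bar a,c$. Since $U,M'\preceq M$, the map $\id_{\bar a}:M'\imp U$ is partial elementary, hence a morphism by \textsf{\footnotesize K1}; the $\kappa$--richness of $U$ extends it to a strong embedding $h:M'\imp U$ fixing $\bar a$, and condition~4 makes $h$ elementary, so $h(c)\in U$ realises $p$. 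For $2\IMP 1$, extend the given $\lambda$--rich $U_0$ to a rich $U$ with $|U|>\lambda$; Theorem~\ref{uniqueness} applied to $\id_{U_0}$ gives $U_0\preceq U$, and $U$ is saturated by~2. Realise $p$ in $U$ by $c$, extract $M'\preceq U$ of size $\le\lambda$ containing $\bar a,c$, and extend $\id_{\bar a}:M'\imp U_0$ to a strong embedding $h:M'\imp U_0$ by $\lambda$--richness of $U_0$. To see that $h(c)$ realises $p$, use saturation of $U$ to obtain an elementary embedding $g:M'\imp U$; then $g\circ h^{-1}:U_0\imp U$ is a morphism between $\lambda$--rich models, elementary by~\ref{uniqueness}, so $\tp^{U_0}(\bar a,h c)=\tp^U(g\bar a,g c)=\tp^{M'}(\bar a,c)\supseteq p$.

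Finally, $1\IMP 4$ follows the same ``compose with an elementary embedding, then appeal to uniqueness'' pattern. Given $f:M\imp U$ with $M\equiv U$ and $U$ rich, reduce by \textsf{\footnotesize Fc} to $f$ finite and by downward L\"owenheim--Skolem to $|M|\le\lambda$. Enlarge $U$ to a rich $U'$ with $|U'|>|M|$ and $|U'|\ge\lambda$, so $U\preceq U'$ by~\ref{uniqueness}. Then $U'$ is $\lambda$--rich, hence $\lambda$--saturated by~1, and a back-and-forth of length $|M|$ produces an elementary embedding $g:M\imp U'$; the composition $g\circ(\id_U\circ f)^{-1}:U'\imp U'$ is a morphism between rich models, elementary by~\ref{uniqueness}, yielding $\tp^{U'}(f\bar a)=\tp^M(\bar a)$ and hence $\tp^U(f\bar a)=\tp^M(\bar a)$. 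The main technical nuisance throughout is keeping cardinalities aligned so that each composition of morphisms lands between two $\mu$--rich models for a common $\mu$, which is handled by freely passing to larger rich extensions.
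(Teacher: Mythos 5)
Your proposal is correct and rests on the same core technique as the paper's proof --- extending or inverting the given morphism so that it becomes a morphism between two $\mu$--rich models for a common $\mu$, and then invoking Theorem~\ref{uniqueness} to conclude it is elementary --- the only difference being that you close the equivalences via the cycle $1\Rightarrow4\Rightarrow2\Rightarrow1$ together with $3\Leftrightarrow4$, whereas the paper proves $1\Rightarrow3$, $2\Rightarrow3$ and $4\Rightarrow1,2$ with $3\Leftrightarrow4$ declared clear. Each individual step, including the cardinality bookkeeping and the appeals to \textsf{\footnotesize K1}, \textsf{\footnotesize K2} and \textsf{\footnotesize Fc}, is sound.
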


\begin{proof} The equivalence $3\IFF4$ is clear. We prove $1 \IMP 3$. Suppose that $U$ is rich. We may assume that $\lambda\le|U|$ (otherwise we prove the claim for a sufficiently large rich model in the same connected component as $U$; then 3 follows easily). By 1, $U$ is saturated. Let $A\subseteq M$ be any finite set and let $M'$ be a countable model such that $A\subseteq M'\preceq M$. If we show that $M'\preceq U$, $M\preceq U$ follows from the arbitrariness of $A$. As $M'\equiv U$, by saturation there is a model $M''\preceq U$ which is isomorphic to $M'$. Let $f:M'\imp M''$ be this isomorphism. Then $f:U\imp U$ is a morphism and, as $U$ is rich, an elementary map by~\ref{uniqueness}. So $M'\preceq U$ as required. The implication $2 \IMP 3$ is similar.

Finally, we assume 4 and prove that if $U$ is $\lambda$--rich then it is $\lambda$--saturated. As $\lambda$ is arbitrary, both $4\IMP1$ and $4\IMP 2$ follow. Let $p(x)$ be a type over some set $A\subseteq U$ of cardinality $<\lambda$. Fix some model $M\equiv_A U$ of cardinality $\le \lambda$ that realizes $p(x)$. By 4, there is an elementary embedding $f:M\imp U$ over $A$. Hence $U$ realizes $p(x)$.
\end{proof}

\begin{cor}\label{morphismselementary} Let $U$ be a rich saturated model. Then for any $M\equiv N\equiv U$, every morphism $f:M\imp N$ is elementary.
\end{cor}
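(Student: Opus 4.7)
The plan is to route the morphism $f:M\imp N$ through a single rich, saturated ``bridge'' model $V$ that contains $U$ as a strong submodel, use condition~4 of Theorem~\ref{allsaturatedmc} on the $V$-side, and then transfer the resulting elementarity back to $f$.

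First, by Theorem~\ref{oneallsaturated} applied to the connected component $C$ of $U$ (the remark after that theorem makes the component-wise reading explicit), every rich model in $C$ is saturated. Consequently the relativised form of condition~4 of Theorem~\ref{allsaturatedmc} is available in $C$: every morphism into a rich $V\in C$ from a model elementarily equivalent to $V$ is elementary. I would then use Theorem~\ref{existence}, together with the standing availability of large inaccessibles, to produce a rich $V$ with $U\le V$ and $|V|$ strictly larger than $|M|$, $|N|$ and $|U|$. Since both $U$ and $V$ are rich, Theorem~\ref{uniqueness} makes the inclusion $U\imp V$ elementary, so $V\equiv U\equiv M\equiv N$, and $V$ is saturated by the previous step.

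Because $N\equiv V$, $V$ is saturated and $|V|>|N|$, standard model theory yields an elementary embedding $h_N:N\to V$, which axiom~\textsf{\footnotesize K1} promotes to a morphism $h_N:N\imp V$; in particular $N\in C$. The composite $h_N\circ f:M\imp V$ is then a morphism in $\K$, which also places $M\in C$. Invoking the relativised condition~4 twice, once for $h_N$ and once for $h_N\circ f$ (both domains are elementarily equivalent to $V$), shows that each of these morphisms is elementary. Consequently, for every tuple $a\subseteq\dom f$ and every formula $\phi$,
\[ M\models\phi(a)\IFF V\models\phi(h_N(f(a)))\IFF N\models\phi(f(a)), \]
which is precisely the assertion that $f:M\imp N$ is elementary.

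The only delicate step is the construction of $V$: it must simultaneously be rich (so that condition~4 of Theorem~\ref{allsaturatedmc} can be invoked), be saturated and large enough to absorb $N$ elementarily, and sit in the connected component of $U$ so that $V\equiv U$ feeds both $M\equiv V$ and $N\equiv V$ into condition~4. Once such a $V$ is in place, the remainder of the argument is a short diagram chase through $V$.
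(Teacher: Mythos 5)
Your proof is correct and follows essentially the same route as the paper's: embed $N$ into a rich model $V$ in the component of $U$, apply condition~4 of Theorem~\ref{allsaturatedmc} to the morphisms $h:N\imp V$ and $hf:M\imp V$, and pull the elementarity back to $f$. The only (harmless) differences are that you obtain $h$ as an elementary embedding from the saturation of $V$ rather than as a strong embedding into a rich extension, and that you are more explicit than the paper about relativising Theorems~\ref{oneallsaturated} and~\ref{allsaturatedmc} to the connected component of $U$.
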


\begin{proof} Let $V$ be a rich model and let $h:N\imp V$ be a strong embedding. Since $V$ and $U$ are in the same connected component, they are elementarily equivalent. Then $h:N\imp V$ and $hf:M\imp V$ are elementary by Theorem~\ref{allsaturatedmc}. It follows that $f:M\imp N$ is elementary.
\end{proof}

\begin{exa}\label{automorphismsrandomgraph}
Truss-generic automorphisms of the random graph.  Let $L$ be the language of graphs and let $T$ be the theory of the random graph. Let $L_0\smallsetminus L$ contain two unary function symbols $f$ and $f^{-1}$ and let $T_0$ be $T$ together with a sentence which says that $f$ is an automorphism with inverse $f^{-1}$. The morphisms of $\K$ are partial embeddings. It is not difficult to verify that the class $\K$ axiomatized by $T_0$ has the amalgamation property and is in fact an inductive amalgamation class. It is known~\cite{kikyo} that $T_0$ has no model companion, hence rich models are not saturated.
\end{exa}

\begin{exa}\label{autnocyles} Cycle-free automorphisms of the random graph. Let $L$, $T$, $N$, and $L_0$ be as in Example~\ref{automorphismsrandomgraph}.  The theory $T_0$ says that $f$ is an automorphism with inverse $f^{-1}$, and moreover for every positive integer $n$ it contains  the axiom $\A x\,f^nx\neq x$. These axioms claim that $f$ has no finite cycles. It is not difficult to verify that the class $\K$ axiomatized by $T_0$ has the amalgamation property and is an inductive amalgamation class if morphisms are partial isomorphisms between models. It is known~\cite{kumac} that $T_0$ has a model-companion, hence rich models are saturated.\end{exa}

\begin{exa}\label{blackfields} Poizat's black fields, uncollapsed version. This is a paradigmatic example among many possible versions of Hrushovski's amalgamation constructions. We refer to~\cite{carre} for all unproved claims. Let $L$ be the language of rings and $T$ the theory of algebracally closed fields of a given characteristic. Let $L_0$ contain a unary predicate $r$. Define 
$$
\delta(A)\ \ =\ \ 2\cdot\deg(A) - |r(A)|,
$$
where $\deg(A)$ is the trascendence degree of $A$. Define a universal theory $T_0$ translating into first-order sentences the requirement that $0\le\delta(A)$ holds for every finite set $A$. 

Fix $(M,\sigma)\models T_0$ and let $A\subseteq M$. We write $A\sqsubseteq M$, if for every finite $B\subseteq M$ we have that $\delta(A\cap B)\le \delta(B)$. Let $\acl(A)$ denote the algebraic closure of $A$ in the signature $L$. Observe that, as $T$ is a complete theory, this does not depend on $M$ nor on $\sigma$. We say that $A$ is a \emph{strong subset\/} of $M$ if $\acl(A)=A\sqsubseteq M$. We write $\textrm{cl}(A)$ for the intersection of all strong subsets of $M$ containing $A$. This is called the \emph{closure\/} of $A$; it clearly depends on $\sigma$ though we do not display it in the notation. It is not difficult to prove that $\textrm{cl}(A)$ is a strong subset.

The morphisms of $\K$ are the maps $f:(N,\tau)\imp (M,\sigma)$ that have an extension to a partial isomorphim $h:(N,\tau)\imp (M,\sigma)$ with $\dom H$ and $\range H$ self-sufficient in $(N,\tau)$ and $(M,\sigma)$ respectively. It is easy to show that if $(N,\tau)\preceq_1(M,\sigma)\models T_0$, then $N$ is self-sufficient in $(M,\sigma)$. So axiom K2 holds. Axiom AP is easily verified by free amalgamation and all the other axioms are clear.


No element of $\acl(\varnothing)$ satisfies $r(x)$ so the class is connected. Rich models are saturated (this uses the definability of Morley rank in algebraically closed fields).
\end{exa}

The models $M$ and $N$ in Theorem~\ref{allsaturatedmc} and its corollaries are required to be elementarily equivalent to some rich model. It would be convenient to replace this condition by $M, N\models\Tr$ but this is not possible in general: the following example shows that there may be models where $\Tr$ holds which are not elementarily equivalent to any rich model.

\begin{exa}\label{counterexample} The language $L_0$ contains a binary predicate $r$ and the constants $c_n$, for $n\le\omega$. Consider the structures of signature $L_0$ where the following axioms hold:
\begin{itemize}
\item[0.] $c_i\neq c_j$ for every distinct $i,j\le\omega$,
\item[1.] $\A x\ \neg r(x,x)$,
\item[2.] $\A x\,y\ [r(x,y)\leftrightarrow r(y,x)]$,
\item[3.] $\E x\, r(c_i,x)\ \imp\ \neg \E x\, r(c_j,x)$ for every distinct $i,j\le\omega$.
\end{itemize}
These are graphs with countably many vertices named. The named vertices are, with one possible exception, isolated. The inductive amalgamation class $\K$ is the disjoint union of the classes $\K_n$ defined as follows for $n\le\omega$. For $n<\omega$, the models of $\K_n$ are the graphs that satisfy Axioms 0--3 above and
\begin{itemize}
\item[a.]  $\E x\; r(c_n,x)$, or 
\item[b.]  $\neg \E x\; r(c_i,x)$ for every $i\le\omega$ and there are exactly $n$ triangles (i.e.\@ cliques of size $3$).
\end{itemize}
The models of $\K_\omega$ satisfy Axioms 0--3 above and
\begin{itemize}
\item[a$'$.]  $\E x\; r(c_\omega,x)$, or 
\item[b$'$.]  $\neg \E x\; r(c_k,x)$ and there are more than $k$ triangles for every $k<\omega$
\end{itemize}
Each $\K_n$ contains two sorts of graphs: those where $c_n$ is the unique constant which is non-isolated and those where all constants are isolated. When  all the constants are isolated, the graph contains exactly $n$ triangles if $n<\omega$, or infinitely many if $n=\omega$. 

The morphisms of $\K_n$ are the partial embeddings. In $\K$ there is no other morphism than those between models in the same component $\K_n$. It is easy to see that $\K$ is an inductive amalgamation class. Since models in different components are not elementarily equivalent \textsf{\footnotesize K1} holds. To prove \textsf{\footnotesize Ap} it suffices to show that if $M_1$ and $M_2$ are models in the same component $\K_n$ and $M_1\cap M_2$ is a common substructure, then there is a model $N$ that is a superstructure of both $M_1$ and $M_2$. There are two cases. If $M_i\models\E x\, r(c_n,x)$ for either one of $i\in\{1,2\}$, we let $N$ be the free amalgam of $M_1$ and $M_2$ over $M_1\cap M_2$, that is, $N=M_1\cap M_2$ with no extra edges added. Otherwise we take $N=M_1\cup M_2\cup\{a\}$, were $a$ is a new vertex and let $r^N:=r^{M_1} \cup r^{M_2}\cup\{\langle c_n,a\rangle, \langle a,c_n\rangle\}$. Axioms 0--3 clearly hold in $N$.

We now describe a countable rich model $U\in\K_n$. This is the disjoint union of two structures $U_{\rm rand}$ and $U_{\rm isol}$: the first is a random graph, and the second contains only isolated vertices. The structure $U_{\rm rand}$ contains $c_n$, while $U_{\rm isol}$ contains all other constants and infinitely many other vertices.

The model $U$ is rich. Let $f:M\imp U$ be a morphism, with $|f|<|M|\le |U|$. We can extend $f$ to $f'$ so that $\{c_i : i\le\omega\}\subseteq\dom f'$. Let $f'=f_{\rm rand}\cup f_{\rm isol}$ where $\range f_{\rm rand}\subseteq U_{\rm rand}$ and $\range f_{\rm isol}\subseteq U_{\rm isol}$. We can extend $f_{\rm rand}$ to an embedding of $M\smallsetminus\dom f_{\rm isol}$ into $U_{\rm rand}$, because this is a random graph. This proves that $U$ is rich.

Consider a structure $M$ which is the disjoint union of a countable random graph and a set of isolated vertices containing all the constants and infinitely many other elements. Since in $M$ all constants are isolated, $M$ is not elementary equivalent to any rich model. But every formula $\phi$ true in $M$ also holds in some rich model $U$ (e.g.\@ if $c_n$ does not occur in $\phi$, then $\phi$ will hold in $U\in\K_n$).
\end{exa}

The example above motivates the following definition.

\begin{definition}\label{deffull} 
An inductive amalgamation class is \emph{full\/} if for every model $M$ the following holds: if each sentence true in $M$ is also true in some rich model $U_\phi$ then some rich model $U$ satisfies $\Th(M)$. Equivalently, $\K$ is full if in each connected component only one completion of $\Tr$ is realized by a model.
\end{definition}

The following theorem generalizes Theorems~\ref{lascar1} and~\ref{allsaturatedmc}.

\begin{thm}\label{sumupsaturation} Suppose $\K$ is full. Then the following are equivalent:
\begin{itemize}
\item[1.] all rich models are saturated;
\item[2.] all $\lambda$--rich models are $\lambda$--saturated;
\item[3.] all saturated model $M\models\Tr$ are rich;
\item[4.] all morphisms between models $M,N\models\Tr$ are elementary;
\item[5.] $M\le N\ \IFF\ M\preceq N$, for any pair of models $M,N\models\Tr$.
\end{itemize}
\end{thm}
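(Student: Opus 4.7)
The key preliminary observation I would record is that under fullness, $M\models\Tr$ if and only if $M\equiv U$ for some rich $U$. One direction is immediate; for the other, note that if $M\models\Tr$ then every sentence true in $M$ must hold in some rich model (else its negation would belong to $\Tr$, contradicting $M\models\Tr$), and fullness then supplies a rich $U$ with $\Th(U)=\Th(M)$. Combined with Theorem~\ref{existence}, fullness ensures that every connected component containing a model of $\Tr$ contains a rich model and that all models of $\Tr$ in such a component are elementarily equivalent.

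My plan is to prove $1\iff 2$ together with the cycle $1\Rightarrow 4\Rightarrow 5\Rightarrow 1$, and then separately $1\iff 3$. The equivalence $1\iff 2$ is just Theorem~\ref{allsaturatedmc}. For $1\Rightarrow 4$, I would observe that a morphism $f:M\imp N$ between models of $\Tr$ places $M,N$ in a common component, apply Theorem~\ref{existence} to get a rich $U$ in that component (saturated by 1, with $U\equiv M\equiv N$ by the preceding observation), and invoke Corollary~\ref{morphismselementary}. The step $4\Rightarrow 5$ is immediate: $M\le N$ means $\id_M:M\imp N$ is a morphism, elementary by 4, hence $M\preceq N$; the converse uses \textsf{K1}. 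For $5\Rightarrow 1$, I would show a rich $U$ is saturated by realizing a given type $p(x)$ over $A\subseteq U$ with $|A|<|U|$ in some $M\equiv_A U$ of cardinality $|A|^+\le|U|$, noting that $\id_A:M\imp U$ is then an elementary partial map hence a morphism (\textsf{K1}), extending it by richness of $U$ to a strong embedding $h:M\imp U$, and using 5 on $h[M]\le U$ (both models of $\Tr$) to conclude $h[M]\preceq U$ and transport the realization of $p$ from $M$ into $U$.

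For $1\Rightarrow 3$, I would take a saturated $M\models\Tr$ and a morphism $f:N\imp M$ with $|f|<|N|\le|M|$, produce a rich $V$ with $M\le V$ via Theorem~\ref{existence}, note $V$ saturated by 1 and $M\preceq V$ by the now-available $1\Rightarrow 5$, and then extend $f:N\imp V$ by richness of $V$ to a strong embedding $h:N\imp V$. Since $N$ lies in the component of $M$, fullness gives $N\models\Tr$, hence $h[N]\le V$ is a model of $\Tr$ and $h[N]\preceq V$ by 5. The identity on $\range f$ is then a partial elementary map $h[N]\imp M$ of size $|f|<|M|$, and saturation of $M$ extends it to an elementary embedding $k:h[N]\imp M$; finally $k\circ h:N\imp M$ is the required total morphism extending $f$. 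For the converse $3\Rightarrow 1$, I would take $U$ rich, $M\equiv U$, and a morphism $f:M\imp U$, enlarge $M$ to a saturated $M^*\succeq M$ (in $\K$ by \textsf{K0}) of cardinality at least $|U|$ which is rich by 3, view $f$ as a morphism $M^*\imp U$ through $M\preceq M^*$, and apply Theorem~\ref{uniqueness} to the rich models $M^*$ and $U$ (in the same component, joined by this very morphism) to conclude $f$ elementary. This is condition 4 of Theorem~\ref{allsaturatedmc}, which gives our 1.

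The hard part will be the pull-back in $1\Rightarrow 3$ in the case $|N|=|M|$, where the elementary submodel $h[N]\preceq V$ has cardinality $|M|$ and must be embedded elementarily into $M$ while fixing $\range f$. Here saturation of $M$ supplies $|M|$-universality and strong $|M|$-homogeneity, and since $h[N]\equiv M$ (both are elementary substructures of $V$) and the prescribed partial elementary map has size $|f|<|M|$, the back-and-forth construction has enough room to extend to the required $k$.
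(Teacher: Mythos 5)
The paper gives no written proof of this theorem (it defers to Theorems~\ref{lascar1} and~\ref{allsaturatedmc}), and your overall plan --- use fullness to identify ``$M\models\Tr$'' with ``$M\equiv U$ for a rich $U$ in the component of $M$'', then quote Theorem~\ref{allsaturatedmc} for $1\Leftrightarrow 2$, Corollary~\ref{morphismselementary} for $1\Rightarrow 4$, and close the cycle with $4\Rightarrow5\Rightarrow1$ --- is exactly the intended reduction, and those parts are correct (your $3\Rightarrow1$ via a saturated elementary extension $M^*$ and Theorem~\ref{uniqueness} is also fine, modulo the paper's standing assumption of enough inaccessibles to get saturated extensions).

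There is, however, a genuine gap in $1\Rightarrow3$: the claim ``since $N$ lies in the component of $M$, fullness gives $N\models\Tr$'' is false. Fullness says that \emph{among the models of $\Tr$} in a component only one completion is realized; it says nothing about models that do not satisfy $\Tr$, and an arbitrary $N$ admitting a morphism into $M$ need not satisfy $\Tr$. Concretely, take $\K$ to be the integral domains of characteristic $0$ with all partial embeddings as morphisms: the class is connected, hence trivially full, all rich models are saturated algebraically closed fields, so hypothesis 1 holds, yet $N=\mathbb Z$ carries the empty morphism into any saturated $M\models\Tr=\mathrm{ACF}_0$ and $\mathbb Z\not\models\Tr$. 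Since your argument needs $h[N]\models\Tr$ in order to invoke item 5 and conclude $h[N]\preceq V$, this step collapses, and with it the claim $h[N]\equiv M$ used in your final back-and-forth. The repair is the one the paper itself uses in the proof of $2\Rightarrow3$ of Theorem~\ref{lascar1}: do not try to place $h[N]$ elementarily in $V$. Instead, note that $\range f\subseteq M\preceq V$, so the identity on $\range f$ is an elementary map $V\imp M$ of size $|f|<|M|$; by saturation of $M$ extend it to an elementary map $k:V\imp M$ whose domain of definition contains $h[N]$ (possible since $|h[N]|\le|M|$). Then $k$ is a morphism by \textsf{\footnotesize K1} and $kh:N\imp M$ is the required strong embedding extending $f$, with no need to know anything about $\Th(N)$.
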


\section{Model companions}\label{companions}

In this section we review some results of~\cite{chpi}, namely Section~3.4 and Proposition~3.5 and we show that they hold in the context of inductive amalgamation classes. We also prove that the existence of model companions is equivalent to fullness of the class plus saturation of rich models. 

We will work under the following condition

\begin{itemize} 
\item[\#] If $M, N \models \Tr$ are models of $\K$,  then $M\subseteq N\ \IFF\ M\le N$.
\end{itemize}

This is equivalent to requiring that any embedding $f:M\imp N$ between $M, N \models \Tr$ is strong, i.e.\@ a morphism. In fact, as $M$ is isomorphic to $f[M]$, then $f[M]$ is in $\K$ and entails $\Tr$, so \# implies that $f[M]\le N$. Then $f:M\imp N$ is the composition of two morphisms, hence a morphism.

\begin{thm}\label{mc-saturated} Assume that \# holds in $\K$. Then the following are equivalent:
\begin{itemize}
\item[1.] $\Tr$ is model-complete;
\item[2.] all rich models are saturated and $\K$ is full.
\end{itemize}
\end{thm}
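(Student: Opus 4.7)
For the direction $(2)\IMP(1)$, the key observation is that condition~\# identifies $\subseteq$ with $\le$ on models of $\Tr$, while Theorem~\ref{sumupsaturation}(5), applicable because $\K$ is full and all rich models are saturated, identifies $\le$ with $\preceq$ on the same. Composing, $M \subseteq N \IMP M \preceq N$ for all $M, N \models \Tr$, which is model-completeness of $\Tr$.

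For $(1)\IMP(2)$ I would first establish fullness. Let $M$ be a model every sentence of which holds in some rich model. Then $M \models \Tr$: if $\phi \in \Tr$ and $M \models \neg\phi$, then $\neg\phi$ would hold in some rich model, contradicting $\phi \in \Tr$. By the Existence theorem, $M$ embeds into a rich model $U$, which lies in the connected component of $M$ since the embedding is a morphism. Since both $M, U \models \Tr$, hypothesis~\# upgrades this embedding to a strong embedding, and model-completeness then upgrades it further to an elementary one. Hence $M \equiv U$, witnessing fullness.

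With fullness in hand, I would verify condition~(5) of Theorem~\ref{sumupsaturation} for arbitrary $M, N \models \Tr$: if $M \le N$ then $M \subseteq N$, so model-completeness gives $M \preceq N$; conversely, $M \preceq N$ implies $M \subseteq N$, and \# then gives $M \le N$. Therefore condition~(5) holds, and Theorem~\ref{sumupsaturation}(1) concludes that all rich models are saturated.

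The main obstacle is conceptual rather than technical: one must keep track of which direction of each biconditional (\# and model-completeness) is being used where. All the substantial content---that saturation of rich models is equivalent to ``every strong embedding between models of $\Tr$ is elementary''---has already been packaged into Theorem~\ref{sumupsaturation}, so the proof is essentially a short diagram chase through that equivalence combined with \#.
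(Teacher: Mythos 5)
Your proposal is correct and follows essentially the same route as the paper: both directions reduce to identifying condition~(5) of Theorem~\ref{sumupsaturation} with model-completeness via \#, with fullness extracted from the resulting statement $M\subseteq N\IFF M\preceq N$. You merely spell out the fullness argument (via the Existence theorem) that the paper compresses into a single ``observe''.
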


\begin{proof} By \# we can replace `$\le$' with `$\subseteq$' in the last assertion of Theorem~\ref{sumupsaturation} and obtain 
\begin{itemize} 
\item[$\dagger$]  if $M,N\models\Tr$, then $M\subseteq N\ \IFF\ M\preceq N$.
\end{itemize}
Observe that $\dagger$ implies that $\K$ is full.
\end{proof}

We say that $\K$ is \emph{axiomatizable\/} if there is a theory $T_0$ such that $M$ is a model if and only if $M \models T_0 $. In this case, we also say that $\K$ \emph{is axiomatised by\/} $T_0$.

\begin{thm}\label{universalconsequences} 
Assume that $\K$ is axiomatised by a theory $T_0$. Then $T_{0,\A}={\Tr}_{,\A}$.
\end{thm}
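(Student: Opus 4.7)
The plan is to show each inclusion of universal consequences separately. One direction is immediate from the axiomatizability of $\K$ by $T_0$; the other will follow by embedding an arbitrary model of $T_0$ into a rich model and invoking the classical preservation of universal sentences under substructures.

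For $T_{0,\A}\subseteq{\Tr}_{,\A}$, I observe that every rich model is, by definition, an object of $\K$, hence a model of $T_0$ by axiomatizability. Every consequence of $T_0$ therefore holds in every rich model, so the deductive closure of $T_0$ is contained in $\Tr$; restricting to universal sentences gives the desired containment for free.

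The substantive direction is ${\Tr}_{,\A}\subseteq T_{0,\A}$. I would fix a universal sentence $\phi\in{\Tr}_{,\A}$ and an arbitrary $M\models T_0$; the task reduces to showing $M\models\phi$. By axiomatizability, $M$ is an object of $\K$, so by Theorem~\ref{existence} (applied with $\lambda$ a sufficiently large inaccessible cardinal, available by the paper's blanket assumption on inaccessibles) there is a rich model $U$ with $M\le U$, and in particular $M$ sits as a first-order substructure of $U$. Now $U\models\Tr$ forces $U\models\phi$, and preservation of $\forall$-sentences under substructures transports $\phi$ down to $M$, giving $M\models\phi$. Since $M$ was arbitrary, $T_0\models\phi$, which is the required conclusion.

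The only substantive ingredient is the Existence theorem, which guarantees that every object of $\K$ sits inside a rich model; the remaining steps are semantic bookkeeping and the standard preservation theorem for universal formulas, so I anticipate no real obstacle.
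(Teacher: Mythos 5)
Your proof is correct and follows essentially the same route as the paper's: the easy inclusion comes from rich models being models of $T_0$, and the substantive one from embedding an arbitrary $M\models T_0$ into a rich model via the Existence theorem and then pulling universal sentences down along the substructure relation. The paper's version is just a terser statement of the same two steps.
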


\begin{proof} 
Clearly $T_0\subseteq\Tr$. Since every structure modelling $T_0$ is a model, it is a substructure of a rich model. Therefore ${\Tr}_{,\A}\subseteq T_{0,\A}$.
\end{proof}

\begin{thm}\label{mcsat}
Assume that $\K$ is axiomatized by $T_0$ and that \# holds in $\K$. Then the following are equivalent:
\begin{itemize}
\item[1.] $\Tr$ is model-complete;
\item[2.] $\Tr$ is the model companion of $T_0$;
\item[3.] all rich models are saturated and $\K$ is full.
\end{itemize}
Conversely, if $T_0$ has a model companion, then $\Tr$ is this model companion.
\end{thm}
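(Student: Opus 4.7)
My plan is to handle the internal equivalences $1\IFF 2\IFF 3$ quickly, then establish the converse clause by a direct argument showing $T^*=\Tr$ as theories. The equivalence $1\IFF 3$ is Theorem~\ref{mc-saturated}. For $1\IFF 2$, the definition of model companion reduces to model-completeness of $\Tr$ plus mutual model-consistency with $T_0$; the latter is automatic from Theorems~\ref{existence} and~\ref{universalconsequences}, since every object of $\K$ is a substructure of a rich model and every model of $\Tr$ satisfies $T_{0,\A}$, hence is a substructure of a model of $T_0$.

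For the converse clause, suppose $T^*$ is a model companion of $T_0$. I will show $T^*=\Tr$ as theories by proving the two inclusions; once this is done, $\Tr$ is automatically model-complete, giving condition 1.

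For $T^*\subseteq\Tr$, I need to show that every rich model $U$ satisfies $T^*$. I will use the classical fact that a model-complete theory is $\A\E$-axiomatizable, since its class of models is closed under unions of elementary chains. So for any axiom $\A\bar x\,\E\bar y\,\psi(\bar x,\bar y)$ of $T^*$ and any $\bar a\in U$, the plan is: take $M\models T^*$ with $U\subseteq M$ (possible since $U\models T_{0,\A}=T^*_\A$); pick a witness $\bar b\in M$ for $\psi(\bar a,\bar b)$; then take a rich $N'$ containing $M$, obtained from $M\models T^*_\A=T_{0,\A}$ by passing first to some $N\in\K$ and then to a rich $N'\supseteq N$ via Theorem~\ref{existence}. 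The key step is $U\preceq N'$: by \# applied to $U,N'\models\Tr$ with $U\subseteq N'$, we have $U\le N'$, and Theorem~\ref{uniqueness} then says this morphism is elementary. Since $\psi$ is quantifier-free, $N'\models\E\bar y\,\psi(\bar a,\bar y)$, and by $U\preceq N'$ also $U\models\E\bar y\,\psi(\bar a,\bar y)$, as required.

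For $\Tr\subseteq T^*$: let $M\models T^*$ and $\phi\in\Tr$; find a rich $U\supseteq M$ as above, which satisfies $T^*$ by the previous paragraph. Then model-completeness of $T^*$ applied to $M\subseteq U$ gives $M\preceq U$, and since $U\models\phi$, also $M\models\phi$. The main obstacle is the $T^*\subseteq\Tr$ direction, where the task is to coordinate the $\A\E$-axiomatization of $T^*$ with the richness machinery (\# and Theorem~\ref{uniqueness}) to pull $\E$-witnesses from the ambient $T^*$-model back into $U$.
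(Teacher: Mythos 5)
Your proof is correct, and for the converse clause it takes a genuinely different route from the paper. The internal equivalences $1\IFF2\IFF3$ are handled exactly as in the paper (Theorem~\ref{mc-saturated} plus the mutual model-consistency supplied by Theorem~\ref{universalconsequences}). For the converse, the paper proves $T_c\subseteq\Tr$ by simply asserting that, by \#, rich models are existentially closed and hence models of the model companion; you instead unfold this into a self-contained argument, using the $\A\E$-axiomatizability of model-complete theories and pulling each existential witness back into the rich model $U$ via the chain $U\subseteq M\subseteq N'$ with $N'$ rich, \#, and Theorem~\ref{uniqueness}. The underlying mechanism (rich models absorb existential witnesses from extensions) is the same, but your version avoids invoking the external fact that the model companion is the theory of the existentially closed models. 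For the reverse inclusion $\Tr\subseteq T^*$ the paper builds an alternating chain $M_0\subseteq U_0\subseteq M_1\subseteq U_1\subseteq\cdots$ of $T_c$-models and rich models and passes to the union, whereas you observe that a single step suffices: once every rich model is known to satisfy $T^*$, embed $M\models T^*$ into a rich $U$, apply model-completeness of $T^*$ to get $M\preceq U$, and transfer $\Tr$ down. Your one-step argument is cleaner and dispenses with the union-of-rich-models discussion; what the paper's chain buys is independence from the order in which the two inclusions are established, but since you prove $T^*\subseteq\Tr$ first, your shortcut is legitimate. One small point worth making explicit in a write-up: when applying Theorem~\ref{uniqueness} to $\id_U:U\imp N'$ you should note that $N'$, being rich of cardinality at least $|U|$, is in particular $|U|$-rich, so the theorem applies with $\lambda=|U|$.
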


\begin{proof} The equivalences $1\IFF2\IFF3$ are clear by Theorems~\ref{mc-saturated} and~\ref{universalconsequences}. To prove the second claim, we assume $T_0$ has a model companion $T_c$. To see that $T_c\subseteq\Tr$ it suffices to observe that, by $\#$, rich models are existentially closed, so $T_c$ holds in every rich model. To prove the converse inclusion, let $M_0\models T_c$ be any structure. We claim that $M_0\models\Tr$. As $T_{0,\A}={\Tr}_{,\A}$, every structure $M\models T_c$ is a substructure of a rich model. Conversely, every rich model is a substructure of some $M\models T_c$, so we can construct a chain of substructures
$$
M_0\subseteq U_0\subseteq M_1\subseteq U_1\subseteq M_2\subseteq\dots\dots,
$$
where $M_i\models T_c$ and $U_i$ is a rich model. It follows that $M_i\preceq M_{i+1}$ and $U_i\preceq U_{i+1}$. Let
$$
U_\omega\ :=\ \bigcup_{i\in\omega}U_i\ =\ \bigcup_{i\in\omega}M_i.
$$ 
Then $M_0\preceq U_\omega$. The union of a chain of rich models is $\omega$--rich, so the theorem follows.\end{proof}

\begin{rem}\label{fullnecessary} The requirement of fullness in 3 of Theorem~\ref{mcsat} is necessary. All rich models in Example~\ref{counterexample} are saturated, but $\Tr$ is not model-complete: the formula $\E y\,r(x,y)$ is not equivalent over $\Tr$ to any universal formula. In fact $\E y\,r(x,y)$ is not preserved under substructure: if $U$ is a rich model in $\K_{\omega}$ then $U \models\E y\,r(c_{\omega},y)$, but in the model $M \subseteq U$ constructed at the end of Example~\ref{counterexample} we have $\neg\E y\,r(c_{\omega},y)$.\end{rem}

\begin{exa}\label{excompletesmall} Let $T$ be any complete small theory with quantifier elimination in the language $L$. Let $L_0\smallsetminus L$ contain only a unary relation symbol $r$ and let $T_0=T$. We define an inductive amalgamation class $\K$. The models of $\K$ are the structures that model $T_0$. The morphisms of $\K$ are partial isomorphisms that have a  domain of definition which is algebraically closed in $T$, as well as any restriction of these maps. It is easy to verify that all the axioms of Section~\ref{definitions} hold in $\K$  (free amalgamation suffices to prove AP). Hypothesis \# is trivially satisfied.

Let $\acl(A)$ denote the algebraic closure in $T$. If $\acl(\varnothing)\neq\varnothing$ the class is not connected: the set $\{a\in\acl(\varnothing) : (M,\sigma)\models r(a)\}$ determines the connected component of the model $(M,\sigma)$. In~\cite{chpi} it is proved that if $T$ eliminates the $\E^{\infty}$ quantifier, then $T_0$ has a model companion: $\Tr$.\end{exa}

\begin{exa}\label{PAPA} Let $T$ and $L$ be as in Example \ref{excompletesmall}. Let $L_0\smallsetminus L$ contain two unary function symbols $f$ and $f^{-1}$ and let $T_0$ be $T$ together with a sentence which says that $f$ is an automorphism with inverse $f^{-1}$. The class $\K$ is defined as in Example \ref{excompletesmall}. Here the amalgamation property is not trivial: when it holds one says that $T$ has the PAPA \cite{lascar}. So suppose $T$ has the PAPA. Then hypothesis \# is again trivially satisfied.

This class is not connected. As in the examples above, the restriction of $f$ to $\acl(\varnothing)$ determines the connected component of $\K$ to which the model belongs. It is considerably more difficult to find a condition which guarantees the model-completeness of $\Tr$ ~\cite{balshel}. An important example is the case where $T$ is the theory of algebraically closed fields, then $\Tr$ is also kown as ACFA. Let $N$ be a countable algebraically closed field of infinite transcendence degree.\end{exa}

\end{document}